\newtheorem{theorem}{Theorem}
\newtheorem{proposition}[theorem]{Proposition}
\newtheorem{lemma}[theorem]{Lemma}
\theoremstyle{definition}
\newtheorem{remark}{Remark}
\newtheorem{definition}[theorem]{Definition}
\newcommand{\cref}[1]{Corollary~\ref{c.#1}}
\numberwithin{equation}{section}
\numberwithin{theorem}{section}
\newcommand{\eps}{\varepsilon}
\title[Homogenization of eigenvalues for problems with high-contrast inclusions]{Homogenization of eigenvalues for problems with high-contrast inclusions}
\author{Xin Fu}
\address[X. Fu]{Yau Mathematical Sciences Center, Tsinghua University, Beijing 100084, P.R. China}
\email{fux20@mails.tsinghua.edu.cn}
\date{\today}
\begin{document}
	\maketitle	
\begin{abstract}
	We study quantitative homogenization of the eigenvalues for elliptic systems with periodically distributed inclusions, where the conductivity of inclusions are strongly contrast to that of the matrix. We propose a quantitative version of periodic unfolding method, based on this and the recent results concerned on high-contrast homogenization, the convergence rates of eigenvalues are studied for any contrast $\delta \in (0,\infty)$. 

	\noindent{\bf Key words}: Periodic homogenization, high contrast media, double porosity problem, eigenvalues asymptotics, periodic unfolding method, perforated domains.
	
	
	\noindent{\bf Mathematics subject classification (MSC 2020)}: 35B27, 35J70, 35P20
\end{abstract}

\section{Introduction}

In this paper, we consider homogenization for eigenvalues of the operator $\mathcal{L}_{\varepsilon, \delta}$ which is described as following: Let $\Omega$ be a bounded Lipschitz domain in $\mathbb{R}^d$, we define the unbounded operator $\mathcal{L}_{\varepsilon,\delta} $ on $[L^2(\Omega)]^m$ by
\begin{equation}
	\mathcal{L}_{\varepsilon,\delta} := -\frac{\partial }{\partial x_i }\left[ \Lambda_{\delta}^{\varepsilon} (x)a_{ij}^{\alpha \beta}  \left( \frac{x}{\varepsilon }\right) \frac{\partial}{\partial x_j}\right] = -\mathrm{div} \left[ \Lambda^{\varepsilon}_{\delta} (x) A\left( \frac{x}{\varepsilon}\right) \nabla \right]
\end{equation}
for $1\leq i,j \leq d, 1\leq \alpha ,\beta \leq m,  0< \varepsilon<1,  0<\delta <\infty$, with the domain
\begin{equation}
	\mathcal{D}( \mathcal{L}_{\varepsilon,\delta}): = \Big\{  u \in [ H_0^1(\Omega) ]^m : \mathcal{L}_{\varepsilon,\delta}\,u \in [ L^2(\Omega) ]^m \Big\}. 
\end{equation}

The coefficient tensor $A(y) = \big( a_{ij}^{\alpha \beta}(y)\big) \in H^1(Y)$ is assumed to be real and satisfy
\begin{itemize}
	\item[(a)] (Ellipticity) There exists $\nu>0$ such that 
	\begin{equation}\label{ellipticc}
		\nu |\xi|^2 \leq a_{ij}^{\alpha \beta}(y )\xi_i^{\alpha}\xi_j^{\beta} \leq \frac{1}{\nu} |\xi|^2 \quad \mathrm{for} \ y\in \mathbb{R}^d \ \mathrm{and} \ \xi = (\xi_i^{\alpha}) \in \mathbb{R}^{dm}.
	\end{equation} 
	\item[(b)] (Periodicity) 
	\begin{equation}\label{periodicc}
		A(y+\mathbf{n}) = A(y) \quad \mathrm{for}\ y\in \mathbb{R}^d \ \mathrm{and} \ \mathbf{n}\in \mathbb{Z}^d.
	\end{equation} 
	\item[(c)] (H\"{o}lder continuity)  For any $y,w \in \mathbb{R}^d$,
	\begin{equation}\label{smoothc}
		|A(y)-A(w)|\leq \tau |y-w|^{\lambda} \quad \mathrm{for}\ \mathrm{some } \ \lambda \in (0,1)\ \mathrm{and\ } \tau>0.
	\end{equation}
	\item[(d)] (Symmetry) For any $y \in \mathbb{R}^d$,
	\begin{equation}\label{symmeticc}
		a_{ij}^{\alpha \beta} (y) = a_{ji}^{\beta \alpha}(y) \quad \mathrm{for} \ 1\leq i,j \leq d \ \mathrm{and}\ 1\leq \alpha ,\beta \leq m.
	\end{equation}
\end{itemize}

The scalar function $\Lambda_{\delta}^{\varepsilon} (x): = \delta  \mathbbm{1}_{D_{\varepsilon}} (x)+ \mathbbm{1}_{\Omega_{\varepsilon}}(x)$ models the contrast between inclusions and matrix: Let $Y=[0,1)^d$ be the unit cell,
and let $\omega \subset Y$ be an open subset with connected Lipschitz boundary such that $\mathrm{dist}(\omega,\partial Y)>0$; for simplicity, assume $\omega$ is simply connected. $\omega$ is then the model inclusion in the unit scale. Given $\varepsilon>0$ and $\mathbf{n}\in \mathbb{Z}^d$, we denote $\eps(\mathbf{n}+Y)$ and $\eps(\mathbf{n}+\omega)$ by $Y_{\varepsilon}^{\mathbf{n}}$  and $\omega_{\varepsilon}^{\mathbf{n}}$, respectively. 
Let $\Pi_{\varepsilon}$ be the set of lattice points $\mathbf{n}$ such that $\overline{Y_{\varepsilon}^{\mathbf{n}} }$ be contained in $\Omega$, i.e.,
\begin{equation}
	\Pi_{\varepsilon}:=\left\{   \mathbf{n}\in \mathbb{Z}^d:  \overline{Y_{\varepsilon}^{\mathbf{n}} }\subset  \Omega  \right\}.
\end{equation}
Then the inclusions set $D_\eps$ and the matrix part $\Omega_\eps$ are defined by
\begin{equation}
	\label{eq:Depsdef}
	D_{\varepsilon}:=\bigcup_{\mathbf{n}\in \Pi_{\varepsilon} } \omega_{\varepsilon}^{\mathbf{n}} , \qquad \Omega_{\varepsilon}:=\Omega \setminus \overline{D_{\varepsilon}}.
\end{equation}

Under these conditions, we study the quantitative asymptotic behavior of the eigenvalues of $\mathcal{L}_{\varepsilon, \delta}$ as $\varepsilon \rightarrow 0$. To state the main results, let $\widehat{\mathcal{L}}_{\delta}$ be the homogenized operator defined on $[ L^2(\Omega) ]^m$ by
\begin{equation}
\widehat{\mathcal{L}}_{\delta}: =-\frac{\partial }{\partial x_i }\left[ \widehat{a}_{ij,\delta}^{\alpha \beta}   \frac{\partial}{\partial x_j}\right] =  - \mathrm{div}\,( \widehat{A}_{\delta} \nabla ),
\end{equation}
with the domain
\begin{equation}
	\mathcal{D}(\widehat{\mathcal{L}}_{\delta}) : = \Big\{  u \in [ H_0^1(\Omega) ]^m : \widehat{\mathcal{L}}_{\delta}\,u \in [ L^2(\Omega) ]^m \Big\},
\end{equation}
where the coefficient tensor
$\widehat{A}_{\delta} = \big(\widehat{a}_{ij,\delta}^{\alpha \beta}\big)$ is defined by
\begin{equation}
	\widehat{a}_{ij,\delta}^{\alpha \beta} = \int_Y \Lambda_{\delta}(y) \Big[ a_{ij}^{\alpha \beta}(y) + a_{ik}^{\alpha \gamma}(y)   \frac{\partial}{\partial y_k} \chi_{j,\delta}^{\gamma \beta}\Big]\,dy. 
\end{equation}
Here for each $i$ and $\alpha$, $\chi_{i,\delta}^{\alpha }= \big(\chi_{i,\delta}^{\alpha 1}, \cdots, \chi_{i,\delta}^{\alpha m}\big)$ is the solution of the \textit{cell problem}
\begin{equation}
	\label{eq:cellproblem}
	\left\{
	\begin{aligned}
		& \mathcal{L}_{1,\delta} \big(\chi_{i,\delta}^{\alpha} + y_ie^{\alpha}\big)  =0 \qquad \mathrm{in}\ Y, \\
		& \chi_{i,\delta}^{\alpha} \ \mathrm{is \ } Y \mathrm{- periodic\ and \ mean \ zero}, 
	\end{aligned}
	\right.
\end{equation}
where $e^{\alpha} = (0,\cdots, 1 ,\cdots, 0) \in \mathbb{R}^m  $ with $1$ in the $\alpha^{\mathrm{th}}$ position. We denote the tensor $\chi_{\delta}  = \big(\chi_{i,\delta}^{\alpha \beta} \big)$.

Let 
\begin{equation}
	\widehat{\Pi}_{\varepsilon}:=\left\{   \mathbf{n}\in \mathbb{Z}^d:  \overline{Y_{\varepsilon}^{\mathbf{n}} }\cap  \Omega \neq \emptyset  \right\}, \quad \mathrm{and} \quad 	\widehat{\Omega}_{\varepsilon} := \bigcup_{\mathbf{n}\in \widehat{\Pi}_{\varepsilon} } \overline{ Y^{\mathbf{n}}_{\varepsilon}}.
\end{equation}

\begin{theorem}\label{theorem 1}
	Let $\lambda^i_{\varepsilon,\delta}$ be the $i$-th eigenvalue of $\mathcal{L}_{\varepsilon,\delta}^{-1}  $ in the decreasing order, and let $\eta^i_{\varepsilon,\delta}$ be the $i$-th eigenvalue of
	\begin{equation}
	\widehat{\mathcal{L}}_{\delta}^{-1} \mathbbm{1}_{\Omega} \langle \cdot \rangle_Y + \kappa \mathcal{P}_{\varepsilon} \mathcal{L}_{\omega,y}^{-1}:[L^2(\widehat{\Omega}_{\varepsilon} \times Y)]^m \rightarrow [ L^2(\widehat{\Omega}_{\varepsilon}  \times Y)  ]^m
	\end{equation} 
 in the decreasing order, then there exists a constant $C>0$, depends only on $\Omega$ and $\omega$, such that
	\begin{equation}\label{eigenrate1}
		\big|\lambda^i_{\varepsilon,\delta}- \eta^i_{\varepsilon,\delta} \big| \leq C\varepsilon^{\frac{1}{2}},
	\end{equation}
where $\langle \cdot \rangle_Y$ is the integral operator with respect to $y$ variable defined in Proposition \ref{quantitative_estimates}, $\kappa = \delta^{-1} \varepsilon^2$, $\mathcal{P}_{\varepsilon}$ is the projection operator in $[ L^2(\widehat{\Omega}_{\varepsilon}  \times Y) ]^m$ on piecewise constant functions in $x$ in each cell $Y^{\mathbf{n}}_{\varepsilon}$ defined in Proposition \ref{quanti_unfolding_prop}, and $\mathcal{L}_{\omega,y}^{-1}\, f$ is the solution of 
\begin{equation}\left\{
	\begin{aligned}
		& \mathcal{L}_{1,1} \,u = f & \mathrm{in} \ \omega, \\
		& u = 0 & \mathrm{on} \ Y \setminus \omega.
	\end{aligned}\right.
\end{equation}
\end{theorem}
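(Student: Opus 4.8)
The plan is to derive \eqref{eigenrate1} from the stability of the ordered eigenvalues of a compact self-adjoint operator under an $O(\varepsilon^{1/2})$ perturbation in operator norm, once $\mathcal{L}_{\varepsilon,\delta}^{-1}$ and the operator in the statement — call it $S_\varepsilon$ — have been placed on the \emph{same} Hilbert space. First I would record the functional-analytic setup: by the ellipticity \eqref{ellipticc} and symmetry \eqref{symmeticc}, $\mathcal{L}_{\varepsilon,\delta}$ is a positive self-adjoint operator on $[L^2(\Omega)]^m$ with compact resolvent and no zero eigenvalue, so $\mathcal{L}_{\varepsilon,\delta}^{-1}$ is compact, self-adjoint and positive with all $\lambda^i_{\varepsilon,\delta}>0$; the homogenized tensor $\widehat A_\delta$ inherits \eqref{ellipticc}--\eqref{symmeticc}, so $\widehat{\mathcal{L}}_\delta^{-1}$ is compact, self-adjoint and positive; $\mathcal{L}_{\omega,y}^{-1}$ is the compact resolvent of a Dirichlet problem; and since $\mathcal{P}_\varepsilon$ is an orthogonal projection acting only in $x$ — hence commuting with $\mathcal{L}_{\omega,y}^{-1}$, which acts only in $y$ — with finite rank in the $x$-variable (finitely many cells in $\widehat\Omega_\varepsilon$), the operator $S_\varepsilon$ is compact, self-adjoint and positive on $[L^2(\widehat\Omega_\varepsilon\times Y)]^m$. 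Consequently $\lambda^i_{\varepsilon,\delta}$ and $\eta^i_{\varepsilon,\delta}$ are governed by the Courant--Fischer min--max formulas, and for compact self-adjoint $T,T'$ on one space one has $|\lambda^i(T)-\lambda^i(T')|\le\|T-T'\|$.

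To realise both operators on one space, let $\mathcal{T}_\varepsilon$ denote the quantitative unfolding operator of Proposition~\ref{quanti_unfolding_prop}, let $\mathcal{E}$ be extension by zero from $\Omega$ to $\widehat\Omega_\varepsilon$, and set $\Phi_\varepsilon:=\mathcal{T}_\varepsilon\mathcal{E}:[L^2(\Omega)]^m\to[L^2(\widehat\Omega_\varepsilon\times Y)]^m$, which is an isometry. Put $\widetilde A_\varepsilon:=\Phi_\varepsilon\,\mathcal{L}_{\varepsilon,\delta}^{-1}\,\Phi_\varepsilon^*$, a compact self-adjoint positive operator on $[L^2(\widehat\Omega_\varepsilon\times Y)]^m$; since $\Phi_\varepsilon^*\Phi_\varepsilon=I$, the operators $\widetilde A_\varepsilon$ and $\mathcal{L}_{\varepsilon,\delta}^{-1}$ have the same nonzero eigenvalues with equal multiplicities, and because $\mathcal{L}_{\varepsilon,\delta}^{-1}$ has no zero eigenvalue we get $\lambda^i(\widetilde A_\varepsilon)=\lambda^i_{\varepsilon,\delta}$ for every $i$. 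The proof then reduces to the operator-norm bound $\|\widetilde A_\varepsilon-S_\varepsilon\|\le C\varepsilon^{1/2}$, from which $|\lambda^i_{\varepsilon,\delta}-\eta^i_{\varepsilon,\delta}|=|\lambda^i(\widetilde A_\varepsilon)-\lambda^i(S_\varepsilon)|\le C\varepsilon^{1/2}$ follows with a constant independent of $i$.

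For the operator-norm bound I would split $[L^2(\widehat\Omega_\varepsilon\times Y)]^m=W_\varepsilon\oplus W_\varepsilon^\perp$ with $W_\varepsilon:=\mathrm{Range}\,\Phi_\varepsilon$. On $W_\varepsilon$, a vector reads $\Phi_\varepsilon f=\mathcal{T}_\varepsilon\mathcal{E}f$, and $(\widetilde A_\varepsilon-S_\varepsilon)\Phi_\varepsilon f=\mathcal{T}_\varepsilon\mathcal{E}\big(\mathcal{L}_{\varepsilon,\delta}^{-1}f\big)-S_\varepsilon\mathcal{T}_\varepsilon\mathcal{E}f$, so the bound $\|(\widetilde A_\varepsilon-S_\varepsilon)\Phi_\varepsilon f\|\le C\varepsilon^{1/2}\|f\|$ is precisely the quantitative two-scale resolvent estimate of Proposition~\ref{quantitative_estimates}, transcribed through $\mathcal{T}_\varepsilon$ — this is the source of the exponent $\tfrac12$, carried by the boundary layer of the Lipschitz domain and by the two-scale corrector. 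On $W_\varepsilon^\perp$ one has $\widetilde A_\varepsilon=0$ since $\Phi_\varepsilon^*$ annihilates $W_\varepsilon^\perp$, so it remains to bound $S_\varepsilon$ there. The mechanism is structural: elements of $W_\varepsilon^\perp$ have vanishing $x$-cell-averages over the cells interior to $\Omega$, so — $\mathcal{P}_\varepsilon$ being cell-averaging in $x$ and commuting with $\mathcal{L}_{\omega,y}^{-1}$ — the microscopic term $\kappa\mathcal{P}_\varepsilon\mathcal{L}_{\omega,y}^{-1}G$ is supported only on cells meeting $\partial\Omega$, while the macroscopic term $\widehat{\mathcal{L}}_\delta^{-1}\mathbbm{1}_\Omega\langle G\rangle_Y$ is $O(\varepsilon)$ in $L^2$ because $\langle G\rangle_Y$ also has vanishing interior cell-averages, is therefore $O(\varepsilon)$-small in $H^{-1}(\Omega)$, and $\widehat{\mathcal{L}}_\delta^{-1}$ is smoothing. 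One then uses the precise definitions of $\mathcal{T}_\varepsilon$ and $\mathcal{P}_\varepsilon$ from Propositions~\ref{quantitative_estimates}--\ref{quanti_unfolding_prop} — in particular that inclusions sit strictly inside their cells and occur only in cells interior to $\Omega$ — to see that the residual boundary-cell contribution is $O(\varepsilon^{1/2})$, whence $\|S_\varepsilon|_{W_\varepsilon^\perp}\|\le C\varepsilon^{1/2}$; summing the two estimates finishes the proof.

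I expect the main obstacle to be twofold. The analytic core is the quantitative two-scale resolvent estimate of Proposition~\ref{quantitative_estimates}, which rests on the quantitative unfolding method combined with the high-contrast corrector analysis; the delicate point there is uniformity in $\delta\in(0,\infty)$ — since the balance between the macroscopic and microscopic scales changes character near the critical regime $\delta\sim\varepsilon^2$, the argument cannot afford errors of size $\kappa\varepsilon^{1/2}$ with $\kappa=\delta^{-1}\varepsilon^2$ large. The second, more bookkeeping-type difficulty, internal to the present proof, is the $\Omega$-versus-$\widehat\Omega_\varepsilon$ mismatch in the treatment of $W_\varepsilon^\perp$: the cells straddling $\partial\Omega$ — where inclusions may protrude from $\Omega$ and where $\mathcal{L}_{\varepsilon,\delta}$ carries no contrast — together with the discrepancy between $\langle\mathcal{T}_\varepsilon f\rangle_Y$ and $f$, must be shown not to degrade the exponent in \eqref{eigenrate1}. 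Should the crude operator-norm bound on $W_\varepsilon^\perp$ prove too lossy, I would instead run a direct two-sided min--max comparison, transferring the first $i$ eigenfunctions of each operator into near-optimal test families for the other through the first-order corrector (macroscopic modes) and the rescaled inclusion eigenfunctions (microscopic modes), carrying out the same boundary-cell estimates.
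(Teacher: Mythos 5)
Your overall skeleton — conjugate $\mathcal{L}_{\varepsilon,\delta}^{-1}$ by the unfolding isometry, bound the operator-norm difference to the limit operator, and invoke eigenvalue stability for compact self-adjoint operators — is exactly the paper's strategy, and the reduction $\lambda^i(\widetilde A_\varepsilon)=\lambda^i_{\varepsilon,\delta}$ is sound. But the analytic core of your argument is missing. You assert that the bound $\|(\widetilde A_\varepsilon-S_\varepsilon)\Phi_\varepsilon f\|\leq C\varepsilon^{1/2}\|f\|$ ``is precisely the quantitative two-scale resolvent estimate of Proposition~\ref{quantitative_estimates}.'' Proposition~\ref{quantitative_estimates} contains no resolvent estimate: it only gives the $O(\varepsilon)$ bounds for $\widetilde{\mathcal{U}}_{\varepsilon}-\langle\cdot\rangle_Y$, $\widetilde{\mathcal{T}}_{\varepsilon}-\iota$ and $\mathcal{P}_{\varepsilon}-\mathrm{Id}$, and says nothing about $\mathcal{L}_{\varepsilon,\delta}^{-1}$. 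The actual source of the exponent $\tfrac12$ is Lemma~\ref{lemma Fu and Jing}, the external estimate $\|\mathcal{L}_{\varepsilon,\delta}^{-1}-\widehat{\mathcal{L}}_{\delta}^{-1}-\delta^{-1}\mathcal{L}_{D_\varepsilon}^{-1}\|_{L^2\to L^2}\leq C\varepsilon^{1/2}$, uniform in $\delta$, which you never invoke. You also omit the exact unfolding identity $\mathcal{T}_{\varepsilon}\,\delta^{-1}\mathcal{L}_{D_\varepsilon}^{-1}\,\mathcal{U}_{\varepsilon}=\kappa\,\mathcal{P}_{\varepsilon}\mathcal{L}_{\omega,y}^{-1}$ (coming from $\varepsilon^{-2}\mathcal{L}_{D_\varepsilon}^{-1}\mathcal{U}_{\varepsilon}=\mathcal{U}_{\varepsilon}\mathcal{L}_{\omega,y}^{-1}$), which is the only place the factor $\kappa=\delta^{-1}\varepsilon^2$ enters; noting that $\mathcal{P}_\varepsilon$ commutes with $\mathcal{L}_{\omega,y}^{-1}$ does not supply this. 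Without Lemma~\ref{lemma Fu and Jing} and this identity, the comparison on $W_\varepsilon$ is not established, so the proof as written does not go through.

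The second problem is your treatment of $W_\varepsilon^\perp$. The claim $\|S_\varepsilon|_{W_\varepsilon^\perp}\|\leq C\varepsilon^{1/2}$ is not justified and, as stated, cannot hold uniformly in $\delta$. The orthogonal complement of $\mathrm{Range}\,\Phi_\varepsilon$ is not just $\mathrm{Ker}\,\mathcal{P}_\varepsilon$: it also contains functions $G=\sum_{\mathbf{n}}\mathbbm{1}_{Y^{\mathbf{n}}_\varepsilon}(x)g_{\mathbf{n}}(y)$ that are already piecewise constant in $x$ but whose unfolded-back versions vanish on $\Omega$ (i.e.\ $g_{\mathbf{n}}$ supported in the part of a boundary cell lying outside $\Omega$). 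For such $G$ one has $\kappa\mathcal{P}_\varepsilon\mathcal{L}_{\omega,y}^{-1}G=\kappa\mathcal{L}_{\omega,y}^{-1}G$, whose norm is of order $\kappa\|G\|$; when $\delta\ll\varepsilon^2$ this is arbitrarily large, so the cells-meeting-$\partial\Omega$ contribution you dismiss as a boundary layer is not small. The paper sidesteps this entirely: it never estimates $S_\varepsilon$ on a complement, because the inclusion part of $S_\varepsilon$ is \emph{identified exactly} with $\mathcal{T}_{\varepsilon}\,\delta^{-1}\mathcal{L}_{D_\varepsilon}^{-1}\,\mathcal{U}_{\varepsilon}$ via the commutation identity, and only the smooth homogenized part $\mathcal{T}_{\varepsilon}\widehat{\mathcal{L}}_{\delta}^{-1}\mathcal{U}_{\varepsilon}-\widehat{\mathcal{L}}_{\delta}^{-1}\mathbbm{1}_{\Omega}\langle\cdot\rangle_Y$ is estimated (by Proposition~\ref{quantitative_estimates}(a),(b), giving $O(\varepsilon)$ on the whole space at once). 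To repair your argument, replace the $W_\varepsilon\oplus W_\varepsilon^\perp$ splitting by the paper's term-by-term comparison of the conjugated operator with $S_\varepsilon$, and cite Lemma~\ref{lemma Fu and Jing} for the resolvent approximation.
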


 It is clear that $\widehat{\mathcal{L}}_{\delta}^{-1} \mathbbm{1}_{\Omega} \langle \cdot \rangle_Y + \kappa \mathcal{P}_{\varepsilon} \mathcal{L}_{\omega,y}^{-1}$ is a positive compact self-adjoint operator on $[ L^2(\widehat{\Omega}_{\varepsilon}\times Y) ]^m$, this yields that its spectrum is discrete.
Given $\varepsilon >0$, we decompose the spectrum into  two parts:
\begin{itemize}
	\item[(a)] The first part comprises all eigenvalues for which the corresponding eigenfunction $u$ has a zero mean in $Y$, i.e. $\int_Y u(x,y)\,dy =0 $. This part is referred to as the pure Bloch spectrum.
	\item[(b)] The second part consists of all eigenvalues such that the corresponding eigenfunction $u$ has a nonzero mean in $Y$, i.e. $\int_Y u(x,y)\,dy \neq 0 $. We call this part as the residual spectrum.
\end{itemize}

The next two theorems characterize the pure Bloch spectrum and the residual spectrum, respectively.

\begin{theorem}\label{thm:bloch spectrum}
	The pure Bloch spectrum is ordered by
	\begin{equation}\label{sigma1multi}
		\underbrace{\kappa \alpha_1=\cdots=\kappa \alpha_1}_{|\widehat{\Pi}_{\varepsilon}| \ \mathrm{terms}} \geq \cdots \geq  \underbrace{\kappa \alpha_i=\cdots=\kappa \alpha_i}_{|\widehat{\Pi}_{\varepsilon}| \ \mathrm{terms}} \geq \cdots \searrow 0,
	\end{equation}
	where the sequence $(\alpha_i)_{i \geq 1}$, ordering decreasingly, denotes the  eigenvalues of $\mathcal{L}_{\omega,y}^{-1}$ with associated mean-zero eigenfunction.
\end{theorem}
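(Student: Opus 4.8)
The plan is to exploit the product structure of the operator $\mathcal{T}:=\widehat{\mathcal{L}}_{\delta}^{-1}\mathbbm{1}_{\Omega}\langle\cdot\rangle_Y+\kappa\mathcal{P}_{\varepsilon}\mathcal{L}_{\omega,y}^{-1}$ once it is restricted to the subspace $V\subset[L^2(\widehat{\Omega}_{\varepsilon}\times Y)]^m$ of functions with vanishing $Y$-mean. First I would observe that if $\mathcal{T}u=\mu u$ with $0\ne u\in V$, then $\langle u\rangle_Y\equiv0$ annihilates the first summand and the eigenvalue equation collapses to
\[
    \kappa\,\mathcal{P}_{\varepsilon}\,\mathcal{L}_{\omega,y}^{-1}u=\mu\,u .
\]
For $\mu\ne0$ this exhibits $u$ as an element of the range of $\mathcal{P}_{\varepsilon}$, i.e.\ $u(x,y)=\sum_{\mathbf{n}\in\widehat{\Pi}_{\varepsilon}}\mathbbm{1}_{Y_{\varepsilon}^{\mathbf{n}}}(x)\,u_{\mathbf{n}}(y)$ with $u_{\mathbf{n}}\in[L^2(Y)]^m$; the value $\mu=0$ is the accumulation point of the spectrum, an eigenvalue of infinite multiplicity carried for instance by mean-zero functions supported in $Y\setminus\overline{\omega}$ in the $y$-variable, which I would set aside from the start.

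Since $\mathcal{L}_{\omega,y}^{-1}$ acts only in $y$, it sends such a function, piecewise constant in $x$, to $\sum_{\mathbf{n}}\mathbbm{1}_{Y_{\varepsilon}^{\mathbf{n}}}(x)(\mathcal{L}_{\omega,y}^{-1}u_{\mathbf{n}})(y)$, which is again constant in $x$ on each cell, so $\mathcal{P}_{\varepsilon}$ leaves it unchanged; the displayed equation therefore decouples cell by cell into
\[
    \kappa\,\mathcal{L}_{\omega,y}^{-1}u_{\mathbf{n}}=\mu\,u_{\mathbf{n}},\qquad\mathbf{n}\in\widehat{\Pi}_{\varepsilon},
\]
where $\mathcal{L}_{\omega,y}^{-1}$ is now the compact, positive, self-adjoint operator on $[L^2(\omega)]^m$ inverting the Dirichlet problem for $\mathcal{L}_{1,1}$ on $\omega$ (extended by zero to $Y$). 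Because this operator depends on its argument only through the restriction to $\omega$, $\mu\ne0$ forces each $u_{\mathbf{n}}$ to vanish on $Y\setminus\omega$, so every nonzero $u_{\mathbf{n}}$ is an eigenfunction of $\mathcal{L}_{\omega,y}^{-1}$ with eigenvalue $\mu/\kappa$; and $u\in V$ gives $\int_Y u_{\mathbf{n}}\,dy=\int_{\omega}u_{\mathbf{n}}\,dy=0$, so each nonzero $u_{\mathbf{n}}$ is in fact a \emph{mean-zero} eigenfunction. Hence $\mu/\kappa\in\{\alpha_i\}$, that is $\mu=\kappa\alpha_i$ for some $i$.

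For the reverse inclusion and the multiplicity count I would run the computation backwards. Given a mean-zero eigenfunction $\phi$ of $\mathcal{L}_{\omega,y}^{-1}$ with eigenvalue $\alpha_i$ and any cell $\mathbf{n}\in\widehat{\Pi}_{\varepsilon}$, the function $\mathbbm{1}_{Y_{\varepsilon}^{\mathbf{n}}}(x)\phi(y)$ lies in $V$ and satisfies $\mathcal{T}\bigl(\mathbbm{1}_{Y_{\varepsilon}^{\mathbf{n}}}\phi\bigr)=\kappa\alpha_i\,\mathbbm{1}_{Y_{\varepsilon}^{\mathbf{n}}}\phi$. Letting $\phi$ range over an orthonormal basis of the mean-zero eigenspace $E^{0}_{\alpha_i}\subset[L^2(\omega)]^m$ of $\mathcal{L}_{\omega,y}^{-1}$ and $\mathbf{n}$ over $\widehat{\Pi}_{\varepsilon}$ yields $|\widehat{\Pi}_{\varepsilon}|\cdot\dim E^{0}_{\alpha_i}$ linearly independent pure Bloch eigenfunctions at the level $\kappa\alpha_i$, and the previous two paragraphs show these span the entire mean-zero eigenspace at that level, since any such eigenfunction is forced into the form $\sum_{\mathbf{n}}\mathbbm{1}_{Y_{\varepsilon}^{\mathbf{n}}}u_{\mathbf{n}}$ with $u_{\mathbf{n}}\in E^{0}_{\alpha_i}\cup\{0\}$. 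With the convention that $(\alpha_i)_{i\ge1}$ lists, decreasingly and with multiplicity $\dim E^{0}_{\alpha_i}$, the eigenvalues of $\mathcal{L}_{\omega,y}^{-1}$ admitting a mean-zero eigenfunction, this identifies the pure Bloch spectrum as the sequence obtained from $(\kappa\alpha_i)$ by replacing each term with a block of $|\widehat{\Pi}_{\varepsilon}|$ equal terms, exactly as displayed; positivity and compactness of $\mathcal{L}_{\omega,y}^{-1}$ then give $\alpha_i>0$ and $\alpha_i\searrow0$, hence $\kappa\alpha_i\searrow0$.

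The argument is essentially a bookkeeping exercise once the tensor structure is made explicit, so I do not anticipate a deep obstacle; the step demanding care is tracking the $Y$-mean-zero condition consistently through the cell decomposition and the identification of eigenspaces on $\omega$ with eigenspaces on $\widehat{\Omega}_{\varepsilon}\times Y$ — in particular keeping the factor $|\widehat{\Pi}_{\varepsilon}|$ separate from the internal multiplicity $\dim E^{0}_{\alpha_i}$, and disposing of the zero eigenvalue cleanly.
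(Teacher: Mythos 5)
Your proposal is correct and follows essentially the same route as the paper: the forward inclusion via the explicit eigenfunctions $\mathbbm{1}_{Y^{\mathbf{n}}_{\varepsilon}}(x)\phi(y)$, and the reverse inclusion by reducing the eigenvalue equation to the decoupled cell problems $\kappa\,\mathcal{L}_{\omega,y}^{-1}u_{\mathbf{n}}=\mu\,u_{\mathbf{n}}$ through the projection $\mathcal{P}_{\varepsilon}$ (the paper applies $\mathcal{P}_{\varepsilon}$ and uses that it commutes with $\mathcal{L}_{\omega,y}^{-1}$, whereas you observe directly that $u$ lies in the range of $\mathcal{P}_{\varepsilon}$ — an equivalent step). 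Your treatment is in fact slightly more complete, since you explicitly dispose of $\mu=0$ and track the multiplicity count $|\widehat{\Pi}_{\varepsilon}|\cdot\dim E^{0}_{\alpha_i}$, which the paper leaves implicit.
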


Let 
\begin{equation*}
	\gamma_{\kappa}(\lambda) = -  \int_Y (\kappa  \mathcal{L}_{\omega,y}^{-1} - \lambda   )^{-1} [I_m](y) \,dy,
\end{equation*}
then we observe that:
\begin{itemize}
	\item[(a)] If $\delta >>\varepsilon^2$, then $\lim_{\varepsilon\rightarrow 0} \kappa =0$, and
	\begin{equation*}
		\lim_{\varepsilon \rightarrow 0} \gamma_{\kappa} (\lambda) = \lambda^{-1} I_m.
	\end{equation*}
	\item[(b)] If $\delta <<\varepsilon^2$, then $\lim_{\varepsilon\rightarrow 0} \kappa =\infty$, and
	\begin{equation*}
		\lim_{\varepsilon \rightarrow 0} \gamma_{\kappa} (\lambda) = \lambda^{-1} (1-\theta)I_m,
	\end{equation*}
	where $\theta =|\omega|$ is the Lebesgue measure of the unit inclusion $\omega$.
	\item[(c)] In the critical case where $\delta \approx \varepsilon^2$, i.e., $\kappa = O(1)$, $\gamma_{\kappa} (\lambda)$ is a nontrivial symmetric matrix. 
\end{itemize}
The residual spectrum can be depicted using $\gamma_{\kappa}$. Given $\varepsilon>0$, we denote the residual spectrum by $\mathrm{RSpec}_{\varepsilon}$. 

\begin{theorem}\label{thm:residual spectrum}
	For any $\gamma_{\kappa}(\lambda) \in \mathrm{Spec}\, \widehat{\mathcal{L}}_{\delta} $ with multiplicity $k$, there exists some $\lambda_{\varepsilon} \in  \mathrm{RSpec}_{\varepsilon}$ with multiplicity $k$, such that 
	\begin{equation}
		\| \gamma_{\kappa}(\lambda_{\varepsilon} )^{-1} - \gamma_{\kappa}(\lambda)^{-1} \| \leq C\varepsilon \|  I_m - \lambda_{\varepsilon}^{-1 }   \gamma_{\kappa} (\lambda_{\varepsilon})^{-1} \|  .
	\end{equation}
Moreover, 
	we have
\begin{equation}
	\gamma_{\kappa}(\mathrm{RSpec}_{\varepsilon})^{-1} \subset  \bigcup_{\gamma_{\kappa}(\lambda) \in \mathrm{Spec}\, \widehat{\mathcal{L}}_{\delta}   }   B\Big( \gamma_{\kappa} (\lambda)^{-1}, C\varepsilon \|  I_m - \lambda_{\varepsilon}^{-1 }   \gamma_{\kappa} (\lambda_{\varepsilon})^{-1} \| \Big).
\end{equation}
\end{theorem}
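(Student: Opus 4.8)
The plan is to reduce the two-scale eigenvalue problem for the operator
$T_\varepsilon:=\widehat{\mathcal L}_\delta^{-1}\mathbbm{1}_\Omega\langle\cdot\rangle_Y+\kappa\mathcal P_\varepsilon\mathcal L_{\omega,y}^{-1}$ to an \emph{approximate} eigenvalue problem for $\widehat{\mathcal L}_\delta^{-1}$ in the $x$-variable alone, in which $\gamma_\kappa(\lambda_\varepsilon)^{-1}$ plays the role of the eigenvalue, and then to invoke self-adjoint spectral stability. Fix an eigenpair $(\lambda_\varepsilon,u_\varepsilon)$ of $T_\varepsilon$ with $\|u_\varepsilon\|_{L^2(\widehat\Omega_\varepsilon\times Y)}=1$ lying in the residual part, so $\phi_\varepsilon:=\langle u_\varepsilon\rangle_Y\ne0$ and, by the very definition of the residual part, $\gamma_\kappa(\lambda_\varepsilon)$ is well defined and invertible ($\lambda_\varepsilon$ is not a pole of $\gamma_\kappa$). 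Put $\Psi_\varepsilon:=\widehat{\mathcal L}_\delta^{-1}(\mathbbm{1}_\Omega\phi_\varepsilon)$, a function of $x$ only; the eigenrelation reads $\lambda_\varepsilon u_\varepsilon-\kappa\mathcal P_\varepsilon\mathcal L_{\omega,y}^{-1}u_\varepsilon=\Psi_\varepsilon$. Applying $\mathcal P_\varepsilon$ and using that $\mathcal P_\varepsilon$ is a projection commuting with both $\mathcal L_{\omega,y}^{-1}$ and $\langle\cdot\rangle_Y$ gives $(\lambda_\varepsilon I-\kappa\mathcal L_{\omega,y}^{-1})\mathcal P_\varepsilon u_\varepsilon=\mathcal P_\varepsilon\Psi_\varepsilon$ cellwise; since $\mathcal P_\varepsilon\Psi_\varepsilon$ is constant in $y$ and piecewise constant in $x$, inverting in $y$ and integrating over $Y$ yields the \emph{exact} identity $\mathcal P_\varepsilon\phi_\varepsilon=\gamma_\kappa(\lambda_\varepsilon)\,\mathcal P_\varepsilon\Psi_\varepsilon$, with $\gamma_\kappa(\lambda_\varepsilon)=\int_Y(\lambda_\varepsilon I-\kappa\mathcal L_{\omega,y}^{-1})^{-1}[I_m]\,dy$.

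Next I would upgrade this to an approximate eigen-identity for $\widehat{\mathcal L}_\delta^{-1}$. From the eigenrelation, $\lambda_\varepsilon u_\varepsilon=\Psi_\varepsilon+($a term piecewise constant in $x)$, hence $\phi_\varepsilon-\mathcal P_\varepsilon\phi_\varepsilon=\lambda_\varepsilon^{-1}(\Psi_\varepsilon-\mathcal P_\varepsilon\Psi_\varepsilon)$; the energy estimate $\|\nabla\Psi_\varepsilon\|_{L^2}\le C\|\phi_\varepsilon\|_{L^2}$ together with the Poincar\'e--Wirtinger bound $\|f-\mathcal P_\varepsilon f\|_{L^2}\le C\varepsilon\|\nabla f\|_{L^2}$ from Proposition~\ref{quanti_unfolding_prop} makes this $O(\varepsilon)\|\phi_\varepsilon\|$. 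Writing $\Psi_\varepsilon=\mathcal P_\varepsilon\Psi_\varepsilon+(\Psi_\varepsilon-\mathcal P_\varepsilon\Psi_\varepsilon)$, substituting $\mathcal P_\varepsilon\Psi_\varepsilon=\gamma_\kappa(\lambda_\varepsilon)^{-1}\mathcal P_\varepsilon\phi_\varepsilon$ and $\mathcal P_\varepsilon\phi_\varepsilon=\phi_\varepsilon-(\phi_\varepsilon-\mathcal P_\varepsilon\phi_\varepsilon)$, one obtains
\[
\widehat{\mathcal L}_\delta^{-1}(\mathbbm{1}_\Omega\phi_\varepsilon)=\gamma_\kappa(\lambda_\varepsilon)^{-1}\phi_\varepsilon+\big(\lambda_\varepsilon I_m-\gamma_\kappa(\lambda_\varepsilon)^{-1}\big)\big(\phi_\varepsilon-\mathcal P_\varepsilon\phi_\varepsilon\big),
\]
so that the residual is bounded by $C\varepsilon\|I_m-\lambda_\varepsilon^{-1}\gamma_\kappa(\lambda_\varepsilon)^{-1}\|\,\|\phi_\varepsilon\|_{L^2}$; a parallel estimate, comparing $u_\varepsilon$ with $(\lambda_\varepsilon I-\kappa\mathcal L_{\omega,y}^{-1})^{-1}[\mathcal P_\varepsilon\Psi_\varepsilon]$, shows $\|\phi_\varepsilon\|_{L^2}$ is bounded below, so $\phi_\varepsilon$ is a genuine approximate eigenfunction of $\widehat{\mathcal L}_\delta^{-1}$ with approximate eigenvalue $\gamma_\kappa(\lambda_\varepsilon)^{-1}$.

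For the spectral comparison: since $\widehat{\mathcal L}_\delta^{-1}$ is positive, compact and self-adjoint and $\gamma_\kappa(\lambda_\varepsilon)$ is symmetric, the operator $\gamma_\kappa(\lambda_\varepsilon)^{-1}\widehat{\mathcal L}_\delta^{-1}$ is self-adjoint in the inner product weighted by $\gamma_\kappa(\lambda_\varepsilon)$, which is equivalent to the standard one in the regime where $\gamma_\kappa$ is positive definite; the approximate identity then forces $\gamma_\kappa(\lambda_\varepsilon)^{-1}$ to lie within $C\varepsilon\|I_m-\lambda_\varepsilon^{-1}\gamma_\kappa(\lambda_\varepsilon)^{-1}\|$ of the matching point $\gamma_\kappa(\lambda)^{-1}$ of $\mathrm{Spec}\,\widehat{\mathcal L}_\delta^{-1}$, which is the first asserted inequality; the final inclusion follows by ranging over all residual eigenvalues. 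For the existence of some $\lambda_\varepsilon\in\mathrm{RSpec}_\varepsilon$ with the same multiplicity $k$ I would run the construction backwards: for a basis $\{e_j\}_{j=1}^k$ of the eigenspace of $\widehat{\mathcal L}_\delta$ attached to $\gamma_\kappa(\lambda)$, the trial functions $U_j(x,y):=(\lambda I-\kappa\mathcal L_{\omega,y}^{-1})^{-1}\big[\gamma_\kappa(\lambda)^{-1}e_j(x)\big]$ satisfy $\langle U_j\rangle_Y=e_j$ and $\|T_\varepsilon U_j-\lambda U_j\|\le C\varepsilon\|\nabla e_j\|$, and are well conditioned (their Gram matrix dominates that of the orthonormal $e_j$), so the multiplicity form of self-adjoint spectral stability yields $k$ residual eigenvalues of $T_\varepsilon$ within $C\varepsilon$ of $\lambda$; the local monotonicity and analyticity of $\lambda\mapsto\gamma_\kappa(\lambda)$ between consecutive poles then convert this into the stated bound and multiplicity count.

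\textbf{Main difficulty.} The delicate point is the matrix-valued spectral correspondence in the last step: reconciling the matrix ``approximate eigenvalue'' $\gamma_\kappa(\lambda_\varepsilon)^{-1}$ with the spectrum of the self-adjoint system operator $\widehat{\mathcal L}_\delta$ while tracking multiplicities, and doing so uniformly in $\varepsilon$ -- in particular keeping the residual eigenvalues a fixed distance from the poles of $\gamma_\kappa$, i.e.\ from the pure Bloch bands $\kappa\alpha_i$ of Theorem~\ref{thm:bloch spectrum}, since the factorization through $\gamma_\kappa$ and every division by $\gamma_\kappa(\lambda_\varepsilon)$ or by $\lambda_\varepsilon$ degenerates there. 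The regularity and projection inputs of the first two steps are, by comparison, routine given Proposition~\ref{quanti_unfolding_prop} and standard elliptic estimates.
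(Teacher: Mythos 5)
Your first two steps reproduce the paper's reduction almost exactly: the identity $\mathcal{P}_{\varepsilon}\phi_{\varepsilon}=\gamma_{\kappa}(\lambda_{\varepsilon})\,\mathcal{P}_{\varepsilon}\Psi_{\varepsilon}$, the substitution back into the eigenrelation, and the resulting residual of size $C\varepsilon\,\|I_m-\lambda_{\varepsilon}^{-1}\gamma_{\kappa}(\lambda_{\varepsilon})^{-1}\|$ are precisely the content of the paper's Proposition~\ref{sigma_2_charac}, where the relation is recorded as an \emph{exact} eigenvalue equation $(\widehat{\mathcal{L}}_{\delta}^{-1}+B_{\varepsilon,\delta,\lambda})f=\gamma_{\kappa}(\lambda)^{-1}f$ with $B_{\varepsilon,\delta,\lambda}=(I_m-\lambda^{-1}\gamma_{\kappa}(\lambda)^{-1})\mathbbm{1}_{\Omega}(\mathcal{P}_{\varepsilon}-\mathrm{Id})\widehat{\mathcal{L}}_{\delta}^{-1}$, together with an exact two-way equality of multiplicities. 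You correctly identify the remaining difficulty, but your proposed resolution of it is the one place where the argument has a genuine gap.

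The gap is in the self-adjointness step. You propose to make $\gamma_{\kappa}(\lambda_{\varepsilon})^{-1}\widehat{\mathcal{L}}_{\delta}^{-1}$ self-adjoint by weighting the $L^2$ inner product with $\gamma_{\kappa}(\lambda_{\varepsilon})$, and you yourself restrict this to ``the regime where $\gamma_{\kappa}$ is positive definite.'' That restriction cannot be dispensed with in your scheme, yet it is not available uniformly: $\gamma_{\kappa}(\lambda)$ is a matrix-valued Herglotz-type function with poles at the Bloch values $\kappa\alpha_i$, and (as the graph of $\beta_{\kappa}$ in Section~\ref{sec:scalar case} already shows in the scalar case) it sweeps through indefinite and singular values on each band; the ``moreover'' inclusion requires running the stability argument at \emph{every} $\lambda_{\varepsilon}\in\mathrm{RSpec}_{\varepsilon}$, not only at those where $\gamma_{\kappa}(\lambda_{\varepsilon})$ happens to be positive definite. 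The paper's proof avoids this entirely by a different device: it works on $H^{-1}(\Omega)$ with the inner product $\langle u,v\rangle_{\delta}:=\langle u,\widehat{\mathcal{L}}_{\delta}^{-1}v\rangle_{H^{-1},H^1_0}$, whose weight is the always-positive operator $\widehat{\mathcal{L}}_{\delta}^{-1}$ rather than the sign-changing matrix $\gamma_{\kappa}(\lambda_{\varepsilon})$; in that inner product $\widehat{\mathcal{L}}_{\delta}^{-1}+B_{\varepsilon,\delta,\lambda}$ is compact and self-adjoint, the matrix factor $I_m-\lambda^{-1}\gamma_{\kappa}(\lambda)^{-1}$ is absorbed into the perturbation rather than into the metric, and $\|B_{\varepsilon,\delta,\lambda}\|_{\delta}\leq C\varepsilon(1+|\lambda|^{-1}\|\gamma_{\kappa}(\lambda)^{-1}\|)$ gives the stated bound for all bands at once. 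Secondarily, your quasimode construction for the multiplicity claim only yields \emph{at least} $k$ nearby eigenvalues and does not by itself certify that they lie in the residual part or that the multiplicity is exactly $k$; the paper gets the exact count for free from the bijective correspondence of eigenspaces in Proposition~\ref{sigma_2_charac}. To repair your argument, replace the $\gamma_{\kappa}$-weighted metric by the $\langle\cdot,\cdot\rangle_{\delta}$ metric (or an equivalent device that does not require positivity of $\gamma_{\kappa}(\lambda_{\varepsilon})$) and promote your approximate identity to the exact perturbed eigenvalue problem before invoking stability.
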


\begin{remark}
	As an immediately corollary of Theorem \ref{theorem 1}, Theorem \ref{thm:bloch spectrum} and Theorem \ref{thm:residual spectrum}, we obtain the qualitative result:
	\begin{equation}
		\lim_{\varepsilon \rightarrow 0} \mathrm{Spec} \, \mathcal{L}_{\varepsilon,\delta} = \left\{
		\begin{aligned}
			& \mathrm{Spec}\, \widehat{\mathcal{L}}_{\delta} & \mathrm{for}\ \delta >>\varepsilon^2, \\
			& \Big( \lim_{\varepsilon \rightarrow 0} \delta \varepsilon^{-2}  \{ \alpha_m^{-1}\}_{m \geq 1}  \Big) \cup (1-\theta)^{-1} \mathrm{Spec}\, \widehat{\mathcal{L}}_0 & \mathrm{for}\ \delta <<\varepsilon^2, \\
			& \kappa^{-1}  \{ \alpha_m^{-1}\}_{m \geq 1} \cup \overline{ \{ \lambda^{-1} : \gamma_{\kappa}(\lambda) \in \mathrm{Spec}\, \widehat{\mathcal{L}}_0\} } & \mathrm{for}\ \delta \approx \varepsilon^2.
		\end{aligned}\right.
	\end{equation}
\end{remark}
\begin{remark}
	More careful analysis of the convergence rate are presented in Section \ref{sec:scalar case} in the scalar case $m=1$, see Theorem \ref{thm:scalar}.
\end{remark}

This paper is organized as following: In Section \ref{sec: periodic unfolding method}, we introduce the quantitative periodic unfolding method. In Section \ref{sec:homo eigenvalue}, we prove Theorem \ref{theorem 1}, Theorem \ref{thm:bloch spectrum} and Theorem \ref{thm:residual spectrum}. In Section \ref{sec:scalar case}, we show a more delicate analysis in the scalar case $m=1$. These completes this paper.

For the sake of clarity and simplicity, we will proceed by omitting the upper symbol $m$ from our notation. Instead of the product space $H^m$, we will use $H$. Here, $H$ denotes a variety of function spaces, including but not limited to $L^2(\Omega)$, $H^1(\Omega)$, and $L^2(\Omega \times Y)$.

\section{Quantitative periodic unfolding method}\label{sec: periodic unfolding method}

In this section, we introduce the quantitative periodic unfolding method. In 1990, Arbogast, Douglas and Hornung \cite{doi:10.1137/0521046} introduced a ‘dilation’ operation to study homogenization in a periodic medium with double porosity. This technique reduces two-scale convergence to weak convergence in an appropriate space. Combining this approach with ideas from Finite Element approximations, Cioranescu, Damlamian and Griso \cite{MR1921004} propose the periodic unfolding method to study homogenization of multiscale periodic problems. For further details, we refer to \cite{allaire_bloch_1998, cioranescu_periodic_2018}. 

The most significant advantage of the periodic unfolding method is that it lifts the notion of (weak) two-scale convergence in $L^2(\Omega)$ to the (weak) convergence in the unfolded space $L^2(\Omega \times Y)$. Nevertheless, to study the convergence rates in problems of homogenization, quantitative estimates for the corresponding operators are required. These estimates are provided in this section.

\begin{definition}
	For $x \in \mathbb{R}^d$, let $[x]_Y\in \mathbb{Z}^d$ be the integer part of $x$, and $\{x\}_Y=x -[x]_Y \in Y$ be the fractional part of $x$. 
	\begin{enumerate}
		\item [(a)] The unfolding operator $\widetilde{\mathcal{T}}_{\varepsilon}:L^2(\widehat{\Omega}_{\varepsilon} ) \rightarrow L^2(\widehat{\Omega}_{\varepsilon}  \times Y)$ is defined by
		\begin{equation*}
			\widetilde{\mathcal{T}}_{\varepsilon} u(x,y)=\sum_{\mathbf{n}\in \widehat{\Pi}_{\varepsilon}} \mathbbm{1}_{Y^{\mathbf{n}}_{\varepsilon}} (x)u\left( \varepsilon  \left[ \frac{x}{\varepsilon}\right]_{Y}+\varepsilon y \right) .
		\end{equation*}
		\item [(b)] The local averaging operator $\widetilde{\mathcal{U}}_{\varepsilon} :L^2(\widehat{\Omega}_{\varepsilon}  \times Y) \rightarrow L^2(\widehat{\Omega}_{\varepsilon} )$ is defined by
		\begin{equation*}
			\widetilde{\mathcal{U}}_{\varepsilon} \phi (x) = \sum_{\mathbf{n}\in \widehat{\Pi}_{\varepsilon}} \mathbbm{1}_{Y^{\mathbf{n}}_{\varepsilon}} (x)\int_Y \phi \left( \varepsilon  \left[ \frac{x}{\varepsilon}\right]_Y+\varepsilon z,  \left\{ \frac{x}{\varepsilon}\right\} _Y \right)\,dz.
		\end{equation*}
	\end{enumerate}
\end{definition}

The following proposition states the basic properties of $\widetilde{\mathcal{T}}_{\varepsilon}$ and $\widetilde{\mathcal{U}}_{\varepsilon}$, whose proof could be found in \cite{allaire_bloch_1998}.

\begin{proposition}\label{quanti_unfolding_prop}
	Let $\Omega \subset \mathbb{R}^d$ be a bounded Lipschitz domain.
	\begin{itemize}
		\item[(a)] $\widetilde{\mathcal{T}}_{\varepsilon}$ and $\widetilde{\mathcal{U}}_{\varepsilon}$ are both bounded by norm $1$, and $\widetilde{\mathcal{T}}_{\varepsilon}$ is the adjoint of $\widetilde{\mathcal{U}}_{\varepsilon}$, i.e., $\widetilde{\mathcal{U}}_{\varepsilon}^* = \widetilde{\mathcal{T}}_{\varepsilon}$. \item[(b)] $\widetilde{\mathcal{U}}_{\varepsilon}\circ \widetilde{\mathcal{T}}_{\varepsilon} = \mathrm{Id}_{L^2(\widehat{\Omega}_{\varepsilon})}$ is the identity operator on $L^2( \widehat{\Omega}_{\varepsilon})$.
		\item[(c)]  $\mathcal{P}_{\varepsilon} := \widetilde{\mathcal{T}}_{\varepsilon} \circ\widetilde{\mathcal{U}}_{\varepsilon}$ is a projection operator in $L^2(\widehat{\Omega}_{\varepsilon} \times Y)$ on piecewise constant function in $x$ in each cell $Y^{\mathbf{n}}_{\varepsilon}$ for $\mathbf{n}\in \widehat{\Pi}_{\varepsilon}$. More precisely, for any $\phi \in L^2(\widehat{\Omega}_{\varepsilon} \times Y)$,
		\begin{equation}\label{compo_projection}
			\mathcal{P}_{\varepsilon} \phi(x,y)  = \sum_{\mathbf{n}\in \widehat{\Pi}_{\varepsilon}} \mathbbm{1}_{Y^{\mathbf{n}}_{\varepsilon}} (x) \varepsilon^{-d} \int_{Y^{\mathbf{n}}_{\varepsilon}} \phi(x',y)\,dx'.
		\end{equation}
	\end{itemize}
\end{proposition}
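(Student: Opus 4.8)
The plan is to verify the three statements directly from the definitions of $\widetilde{\mathcal{T}}_{\varepsilon}$ and $\widetilde{\mathcal{U}}_{\varepsilon}$, exploiting the fact that on each cell $Y^{\mathbf{n}}_{\varepsilon}$ with $\mathbf{n}\in\widehat{\Pi}_{\varepsilon}$ the maps reduce, after the change of variables $x = \varepsilon(\mathbf{n}+y)$, to elementary operations on $L^2(Y)$ and $L^2(Y\times Y)$. First I would establish (a). For the norm bounds, fix $u\in L^2(\widehat{\Omega}_{\varepsilon})$; since the cells $Y^{\mathbf{n}}_{\varepsilon}$ are disjoint and $\widehat{\Omega}_{\varepsilon}=\bigcup_{\mathbf{n}\in\widehat{\Pi}_{\varepsilon}}\overline{Y^{\mathbf{n}}_{\varepsilon}}$, one computes
\begin{equation*}
\int_{\widehat{\Omega}_{\varepsilon}}\int_Y |\widetilde{\mathcal{T}}_{\varepsilon}u(x,y)|^2\,dy\,dx = \sum_{\mathbf{n}\in\widehat{\Pi}_{\varepsilon}} \int_{Y^{\mathbf{n}}_{\varepsilon}}\int_Y \Big| u\big(\varepsilon[x/\varepsilon]_Y+\varepsilon y\big)\Big|^2\,dy\,dx = \varepsilon^d\sum_{\mathbf{n}\in\widehat{\Pi}_{\varepsilon}}\int_Y\int_Y |u(\varepsilon\mathbf{n}+\varepsilon y)|^2\,dy\,dz,
\end{equation*}
and the inner $y$-integral equals $\varepsilon^{-d}\int_{Y^{\mathbf{n}}_{\varepsilon}}|u|^2$, so the total is $\|u\mathbbm{1}_{\widehat{\Omega}_{\varepsilon}}\|_{L^2}^2 \le \|u\|_{L^2}^2$, giving $\|\widetilde{\mathcal{T}}_{\varepsilon}\|\le 1$ (and in fact isometry onto its image). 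The adjoint identity $\widetilde{\mathcal{U}}_{\varepsilon}^*=\widetilde{\mathcal{T}}_{\varepsilon}$ follows by writing out $\langle \widetilde{\mathcal{T}}_{\varepsilon}u,\phi\rangle_{L^2(\widehat{\Omega}_{\varepsilon}\times Y)}$, performing on each cell the substitution $(x,y)\mapsto(\varepsilon\mathbf{n}+\varepsilon y,\{x/\varepsilon\}_Y)$ — more precisely swapping the roles of the "slow" argument of $u$ and the cell-averaging variable of $\phi$ — and recognizing the result as $\langle u,\widetilde{\mathcal{U}}_{\varepsilon}\phi\rangle_{L^2(\widehat{\Omega}_{\varepsilon})}$; the bound $\|\widetilde{\mathcal{U}}_{\varepsilon}\|\le 1$ is then immediate from $\|\widetilde{\mathcal{U}}_{\varepsilon}\|=\|\widetilde{\mathcal{T}}_{\varepsilon}\|$, or alternatively from Jensen's inequality applied to the inner average.

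Next, (b): for $u\in L^2(\widehat{\Omega}_{\varepsilon})$ and $x\in Y^{\mathbf{n}}_{\varepsilon}$ we have $\widetilde{\mathcal{T}}_{\varepsilon}u(x,y)=u(\varepsilon\mathbf{n}+\varepsilon y)$, which does not depend on the sub-cell variable $z$ appearing in the definition of $\widetilde{\mathcal{U}}_{\varepsilon}$; hence
\begin{equation*}
\widetilde{\mathcal{U}}_{\varepsilon}(\widetilde{\mathcal{T}}_{\varepsilon}u)(x) = \int_Y (\widetilde{\mathcal{T}}_{\varepsilon}u)\big(\varepsilon\mathbf{n}+\varepsilon z,\{x/\varepsilon\}_Y\big)\,dz = \int_Y u\big(\varepsilon\mathbf{n}+\varepsilon\{x/\varepsilon\}_Y\big)\,dz = u(x),
\end{equation*}
since $\varepsilon\mathbf{n}+\varepsilon\{x/\varepsilon\}_Y = \varepsilon[x/\varepsilon]_Y + \varepsilon\{x/\varepsilon\}_Y = x$. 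This gives $\widetilde{\mathcal{U}}_{\varepsilon}\circ\widetilde{\mathcal{T}}_{\varepsilon}=\mathrm{Id}$.

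Finally (c): that $\mathcal{P}_{\varepsilon}:=\widetilde{\mathcal{T}}_{\varepsilon}\circ\widetilde{\mathcal{U}}_{\varepsilon}$ is idempotent is formal from (b), namely $\mathcal{P}_{\varepsilon}^2=\widetilde{\mathcal{T}}_{\varepsilon}(\widetilde{\mathcal{U}}_{\varepsilon}\widetilde{\mathcal{T}}_{\varepsilon})\widetilde{\mathcal{U}}_{\varepsilon}=\widetilde{\mathcal{T}}_{\varepsilon}\widetilde{\mathcal{U}}_{\varepsilon}=\mathcal{P}_{\varepsilon}$, and it is self-adjoint by (a); so it is an orthogonal projection. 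For the explicit formula \eqref{compo_projection}, I would compute, for $\phi\in L^2(\widehat{\Omega}_{\varepsilon}\times Y)$ and $x\in Y^{\mathbf{n}}_{\varepsilon}$,
\begin{equation*}
\mathcal{P}_{\varepsilon}\phi(x,y) = \widetilde{\mathcal{U}}_{\varepsilon}\phi\big(\varepsilon\mathbf{n}+\varepsilon y\big) = \int_Y \phi\big(\varepsilon\mathbf{n}+\varepsilon z, y\big)\,dz,
\end{equation*}
using that $[(\varepsilon\mathbf{n}+\varepsilon y)/\varepsilon]_Y=\mathbf{n}$ and $\{(\varepsilon\mathbf{n}+\varepsilon y)/\varepsilon\}_Y=y$; the substitution $x'=\varepsilon\mathbf{n}+\varepsilon z$ turns the $z$-integral into $\varepsilon^{-d}\int_{Y^{\mathbf{n}}_{\varepsilon}}\phi(x',y)\,dx'$, which is exactly the right-hand side of \eqref{compo_projection}, manifestly piecewise constant in $x$ on each cell. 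I do not expect any genuine obstacle here; the only point demanding care is bookkeeping in the change of variables — keeping straight which variable of $\phi$ is the "cell address" and which is the "local" coordinate — and confirming that the disjointness of the cells together with $\mathrm{dist}(\omega,\partial Y)>0$ makes all the cell-by-cell summations legitimate with no boundary overlap. Since $\widetilde{\mathcal{T}}_{\varepsilon}$ and $\widetilde{\mathcal{U}}_{\varepsilon}$ act trivially outside $\widehat{\Omega}_{\varepsilon}$, the argument is entirely local and the constants are universal.
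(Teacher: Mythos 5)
Your proposal is correct: the cell-by-cell change of variables $x=\varepsilon\mathbf{n}+\varepsilon y$ yields the isometry property, the duality identity $\widetilde{\mathcal{U}}_{\varepsilon}^*=\widetilde{\mathcal{T}}_{\varepsilon}$, the identity $\widetilde{\mathcal{U}}_{\varepsilon}\circ\widetilde{\mathcal{T}}_{\varepsilon}=\mathrm{Id}$, and the explicit formula \eqref{compo_projection} exactly as you compute. The paper itself does not prove this proposition but defers to the cited literature, where the argument is precisely this direct verification, so your write-up matches the standard proof; the only cosmetic point is that the condition $\mathrm{dist}(\omega,\partial Y)>0$ plays no role here --- what matters is only that the cells $Y^{\mathbf{n}}_{\varepsilon}$ are pairwise disjoint up to null sets.
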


We denote that
\begin{equation}
	\mathcal{T}_{\varepsilon} = \widetilde{\mathcal{T}}_{\varepsilon} \circ \mathbbm{1}_{\Omega}, \quad \mathrm{and}\quad  \mathcal{U}_{\varepsilon} = \mathbbm{1}_{\Omega} \circ  \widetilde{\mathcal{U}}_{\varepsilon} ,
\end{equation}
then $\mathcal{T}_{\varepsilon}  $ is the adjoint of $\mathcal{U}_{\varepsilon}  $, and $\mathcal{U}_{\varepsilon}  \circ \mathcal{T}_{\varepsilon}  = \mathbbm{1}_{\Omega}$.

As we have mentioned above, the periodic unfolding method transforms the concept of two-scale convergence in $L^2(\Omega)$ into the typical convergence in $L^2(\Omega \times Y)$. Hence, the two-scale compactness becomes the usual compactness in $L^2(\Omega \times Y)$. This facilitates an easy transition from the equation under consideration to the homogenized equation at the homogenization limit. However, to obtain the rate of convergence, one must consider the asymptotic limit of the unfolding operators. This is presented in Proposition \ref{quantitative_estimates}. Before that, we need a lemma:

\begin{lemma}\label{lem_difference_gradient}
	Let $D\subset \mathbb{R}^d$ be a bounded Lipschitz domain, there exists a constant $C>0$, depends only on $D$,  such that for any $u \in H^1(D)$, 
	\begin{equation*}
		\int_D\int_D |u(x)-u(y)|^2\,dxdy \leq C \| \nabla u\|^2_{L^2(D)}.
	\end{equation*}
\end{lemma}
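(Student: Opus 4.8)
The plan is to prove the inequality
$$\int_D\int_D |u(x)-u(y)|^2\,dx\,dy \leq C\|\nabla u\|_{L^2(D)}^2$$
by reducing it to the Poincar\'e--Wirtinger inequality on $D$. First I would expand the left-hand side: writing $\bar u = |D|^{-1}\int_D u$ for the mean of $u$ over $D$, a direct computation gives
$$\int_D\int_D |u(x)-u(y)|^2\,dx\,dy = 2|D|\int_D |u(x)-\bar u|^2\,dx - 2\Bigl|\int_D (u(x)-\bar u)\,dx\Bigr|^2 = 2|D|\int_D |u(x)-\bar u|^2\,dx,$$
since the second term vanishes by the definition of $\bar u$. (In the vector-valued setting one applies this coordinatewise and sums.) Hence the claim is equivalent to $2|D|\,\|u-\bar u\|_{L^2(D)}^2 \leq C\|\nabla u\|_{L^2(D)}^2$.

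The second step is simply to invoke the Poincar\'e--Wirtinger inequality: since $D$ is a bounded Lipschitz (hence connected, up to treating components separately, or one assumes $D$ connected as is implicit here) domain, there is a constant $C_P = C_P(D)$ with $\|u-\bar u\|_{L^2(D)}^2 \leq C_P \|\nabla u\|_{L^2(D)}^2$ for all $u \in H^1(D)$. Combining with the first step gives the result with $C = 2|D|\,C_P$, which depends only on $D$. One should remark that for disconnected $D$ the double-integral quantity genuinely controls only the within-component oscillation unless one is careful, but since in the paper $D$ will be taken to be $Y$ or $\omega$ (connected), and the statement is phrased for a Lipschitz domain, this is not an issue; alternatively one can absorb the finitely many connectedness components into the constant if a uniform lower bound on component volumes is assumed.

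There is no serious obstacle here — the only ``content'' is the Poincar\'e--Wirtinger inequality, which is standard for bounded Lipschitz domains (via the Rellich--Kondrachov compactness argument, or the Deny--Lions / Ne\v{c}as approach). The mildly delicate point, worth stating explicitly, is the algebraic identity in the first step showing that the cross term drops out exactly because we subtract the true mean; this is what makes the bound tight and dimensionally consistent. I would present the computation in one displayed line, cite Poincar\'e--Wirtinger, and conclude.
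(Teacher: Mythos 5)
Your proof is correct and follows essentially the same route as the paper: both reduce the double integral to $\|u-\langle u\rangle_D\|_{L^2(D)}^2$ and invoke the Poincar\'e--Wirtinger inequality. The only cosmetic difference is that you expand the square exactly (noting the cross term vanishes), whereas the paper just uses the bound $|u(x)-u(y)|^2 \leq 2|u(x)-\langle u\rangle_D|^2 + 2|u(y)-\langle u\rangle_D|^2$; both yield the same constant up to a factor and your connectedness remark is a reasonable aside.
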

\begin{proof}
	This is just a corollary of the Poincar\'{e}-Wirtinger inequality. We note that
	\begin{equation*}
		|u(x)-u(y)|^2 \leq 2|u(x) - \langle u \rangle_D |^2 + 2|u(y) -  \langle u \rangle_D |^2 ,
	\end{equation*}
	where $\langle u\rangle_D = \int_D u(x)\,dx$. Therefore, 
	\begin{equation*}
			\int_D\int_D |u(x)-u(y)|^2\,dxdy \leq  2 |D| \Big(\int_D |u(x) - \langle u \rangle_D |^2 \,dx + \int_D |u(y) - \langle u \rangle_D |^2 \,dy \Big) \leq C \| \nabla u\|^2_{L^2(D)},
	\end{equation*}
	where the second inequality using the Poincar\'{e}-Wirtinger inequality.
\end{proof}

\begin{proposition}[Quantitative estimates]\label{quantitative_estimates}
	Let $\langle \cdot \rangle_Y:L^2(\widehat{\Omega}_{\varepsilon}  \times Y) \rightarrow L^2(\widehat{\Omega}_{\varepsilon} )$ be the integral operator with respect to $y$ variable, i.e.,
	\begin{equation*}
		\langle \cdot \rangle_Y : \phi(x,y) \mapsto \int_Y \phi (x,y)\,dy.
	\end{equation*}
	Let $\iota:L^2(\widehat{\Omega}_{\varepsilon} ) \rightarrow L^2(\widehat{\Omega}_{\varepsilon}  \times Y)$ be the embedding operator, i.e.,
	\begin{equation*}
		\iota: u(x) \rightarrow u(x,y).
	\end{equation*} 
	It is clear that $\iota^* = \langle \cdot \rangle_Y$. Then there exists a constant $C>0$, depends only on $\Omega$, such that
	\begin{itemize}
		\item[(a)]   $	\| \widetilde{\mathcal{U}}_{\varepsilon} - \langle \cdot \rangle_Y \|_{L^2(\widehat{\Omega}_{\varepsilon}  \times Y) \rightarrow H^{-1}(\widehat{\Omega}_{\varepsilon} )}\leq C\varepsilon$.
		\item[(b)] $\| \widetilde{\mathcal{T}}_{\varepsilon} - \iota \|_{H^1(\widehat{\Omega}_{\varepsilon}  ) \rightarrow  L^2(\widehat{\Omega}_{\varepsilon}  \times Y)}\leq C \varepsilon$.
		\item[(c)] $\| \mathcal{P}_{\varepsilon} - \mathrm{Id}_{L^2(\widehat{\Omega}_{\varepsilon} )} \|_{H^1(\widehat{\Omega}_{\varepsilon} ) \rightarrow L^2(\widehat{\Omega}_{\varepsilon}  )}\leq C\varepsilon$. 
	\end{itemize}
\end{proposition}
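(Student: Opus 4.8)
The plan is to prove part (b) by a direct cell-by-cell computation resting on Lemma~\ref{lem_difference_gradient}, and then to deduce (a) and (c) from (b) using, respectively, the adjointness relations of Proposition~\ref{quanti_unfolding_prop} and the algebraic identity $\widetilde{\mathcal{U}}_\varepsilon\circ\widetilde{\mathcal{T}}_\varepsilon=\mathrm{Id}$.

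For (b), I would first note that for $x\in Y^{\mathbf{n}}_\varepsilon$ one has $\varepsilon[x/\varepsilon]_Y=\varepsilon\mathbf{n}$, so that for $u\in H^1(\widehat{\Omega}_\varepsilon)$,
\[
  \widetilde{\mathcal{T}}_\varepsilon u(x,y)-\iota u(x,y)=\sum_{\mathbf{n}\in\widehat{\Pi}_\varepsilon}\mathbbm{1}_{Y^{\mathbf{n}}_\varepsilon}(x)\bigl(u(\varepsilon\mathbf{n}+\varepsilon y)-u(x)\bigr).
\]
Squaring the $L^2(\widehat{\Omega}_\varepsilon\times Y)$-norm and rescaling each cell by $x=\varepsilon(\mathbf{n}+z)$, $z\in Y$, turns it into $\sum_{\mathbf{n}}\varepsilon^d\int_Y\int_Y|v_{\mathbf{n}}(y)-v_{\mathbf{n}}(z)|^2\,dy\,dz$ with $v_{\mathbf{n}}(w):=u(\varepsilon(\mathbf{n}+w))\in H^1(Y)$, for which the chain rule gives $\|\nabla v_{\mathbf{n}}\|_{L^2(Y)}^2=\varepsilon^{2-d}\|\nabla u\|_{L^2(Y^{\mathbf{n}}_\varepsilon)}^2$. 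Applying Lemma~\ref{lem_difference_gradient} on $D=Y$ to each $v_{\mathbf{n}}$ and summing over $\mathbf{n}\in\widehat{\Pi}_\varepsilon$ (the half-open cells exhausting $\widehat{\Omega}_\varepsilon$ up to a null set) yields
\[
  \|\widetilde{\mathcal{T}}_\varepsilon u-\iota u\|_{L^2(\widehat{\Omega}_\varepsilon\times Y)}^2\le C\varepsilon^2\|\nabla u\|_{L^2(\widehat{\Omega}_\varepsilon)}^2\le C\varepsilon^2\|u\|_{H^1(\widehat{\Omega}_\varepsilon)}^2,
\]
which is (b); the constant is the one furnished by Lemma~\ref{lem_difference_gradient} on the unit cube, hence depends only on $d$.

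For (a), the adjointness $\widetilde{\mathcal{U}}_\varepsilon^{*}=\widetilde{\mathcal{T}}_\varepsilon$ from Proposition~\ref{quanti_unfolding_prop}(a) together with $\iota^{*}=\langle\cdot\rangle_Y$ give, for every $\phi\in L^2(\widehat{\Omega}_\varepsilon\times Y)$ and $\psi\in H^1(\widehat{\Omega}_\varepsilon)$,
\[
  \int_{\widehat{\Omega}_\varepsilon}\bigl(\widetilde{\mathcal{U}}_\varepsilon\phi-\langle\phi\rangle_Y\bigr)\psi\,dx=\int_{\widehat{\Omega}_\varepsilon\times Y}\phi\,\bigl(\widetilde{\mathcal{T}}_\varepsilon\psi-\iota\psi\bigr)\,dx\,dy,
\]
whence by Cauchy--Schwarz and (b) the left side is bounded by $C\varepsilon\|\phi\|_{L^2(\widehat{\Omega}_\varepsilon\times Y)}\|\psi\|_{H^1(\widehat{\Omega}_\varepsilon)}$; taking the supremum over $\psi$ in the unit ball of $H^1(\widehat{\Omega}_\varepsilon)$ (a fortiori over $H^1_0$, so the bound covers either realization of $H^{-1}(\widehat{\Omega}_\varepsilon)$) gives (a). For (c), the identity $\mathcal{P}_\varepsilon\widetilde{\mathcal{T}}_\varepsilon=\widetilde{\mathcal{T}}_\varepsilon(\widetilde{\mathcal{U}}_\varepsilon\widetilde{\mathcal{T}}_\varepsilon)=\widetilde{\mathcal{T}}_\varepsilon$, combined with the splitting $\iota u=\widetilde{\mathcal{T}}_\varepsilon u-(\widetilde{\mathcal{T}}_\varepsilon u-\iota u)$, yields $\mathcal{P}_\varepsilon\iota u-\iota u=(\mathrm{Id}-\mathcal{P}_\varepsilon)(\widetilde{\mathcal{T}}_\varepsilon u-\iota u)$; since $\mathcal{P}_\varepsilon$ is a projection of norm $\le 1$, this is $\le 2\|\widetilde{\mathcal{T}}_\varepsilon u-\iota u\|_{L^2}\le C\varepsilon\|u\|_{H^1}$ by (b). As $\mathcal{P}_\varepsilon\iota u$ is independent of $y$ and $|Y|=1$, this is exactly (c) once $\mathcal{P}_\varepsilon$ restricted to $y$-independent functions is identified, via Proposition~\ref{quanti_unfolding_prop}(c), with the cell-averaging operator $u\mapsto\sum_{\mathbf{n}}\mathbbm{1}_{Y^{\mathbf{n}}_\varepsilon}\,\varepsilon^{-d}\int_{Y^{\mathbf{n}}_\varepsilon}u$.

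I do not anticipate a genuine obstacle. The two points requiring care are: getting the $\varepsilon$-scaling of the gradient right in the change of variables for (b) --- it is the factor $\varepsilon^{2-d}$, against the Jacobian $\varepsilon^{d}$, that produces the final $\varepsilon^2$; and, in (a), carrying out the duality against test functions in a space (namely all of $H^1(\widehat{\Omega}_\varepsilon)$) to which (b) applies directly, which automatically handles whichever convention for $H^{-1}(\widehat{\Omega}_\varepsilon)$ is in force.
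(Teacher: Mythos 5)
Your proof of (b) is essentially the paper's: rescale each cell via $x=\varepsilon(\mathbf{n}+z)$ and apply Lemma~\ref{lem_difference_gradient} on $D=Y$, the Jacobian $\varepsilon^{d}$ against the gradient scaling $\varepsilon^{2-d}$ producing the factor $\varepsilon^{2}$. Where you genuinely diverge is in (a) and (c), which the paper proves by direct computation: for (a) it expands the pairing $\langle\widetilde{\mathcal{U}}_{\varepsilon}v-\langle v\rangle_Y,\phi\rangle$, exploits that the cell quantity $V^{\mathbf{n}}_{\varepsilon}$ has zero mean over $Y$ in order to subtract the cell average of the test function, and then applies the Poincar\'e--Wirtinger inequality to $\phi$ on each rescaled cell; for (c) it applies Poincar\'e--Wirtinger to $u$ on each cell $Y^{\mathbf{n}}_{\varepsilon}$ directly. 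You instead deduce (a) from (b) through the adjointness $\widetilde{\mathcal{U}}_{\varepsilon}^{*}=\widetilde{\mathcal{T}}_{\varepsilon}$, $\iota^{*}=\langle\cdot\rangle_Y$, and (c) from (b) through $\mathcal{P}_{\varepsilon}\widetilde{\mathcal{T}}_{\varepsilon}=\widetilde{\mathcal{T}}_{\varepsilon}$ together with the boundedness of $\mathrm{Id}-\mathcal{P}_{\varepsilon}$. Both deductions are sound: testing against the unit ball of $H^{1}(\widehat{\Omega}_{\varepsilon})\supset H^{1}_{0}(\widehat{\Omega}_{\varepsilon})$ indeed dominates the $H^{-1}$ norm, the sign bookkeeping in $\mathcal{P}_{\varepsilon}\iota u-\iota u=(\mathrm{Id}-\mathcal{P}_{\varepsilon})(\widetilde{\mathcal{T}}_{\varepsilon}u-\iota u)$ checks out, and your identification of $\mathcal{P}_{\varepsilon}\iota u$ with the cell-averaging of $u$ (using $|Y|=1$) is exactly \eqref{compo_projection}. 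The trade-off is that your version invokes the Poincar\'e--Wirtinger input only once and is shorter, whereas the paper's itemwise computations are self-contained and, in (a), never route the test function through the unfolding operator.
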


\begin{proof} We prove the proposition one item by one item.
	\begin{itemize}
		\item[(a)] 
		We verify that $	\| \widetilde{\mathcal{U}}_{\varepsilon} - \langle \cdot \rangle_Y \|_{L^2(\widehat{\Omega}_{\varepsilon}  \times Y) \rightarrow H^{-1}(\widehat{\Omega}_{\varepsilon} )}\leq C\varepsilon$. Given $v \in L^2(\widehat{\Omega}_{\varepsilon} \times Y)$ and $\phi \in H_0^1(\widehat{\Omega}_{\varepsilon})$, by definition we have
		\begin{equation*}
				 \big\langle  \widetilde{\mathcal{U}}_{\varepsilon} v -\langle v \rangle_Y ,\phi \big\rangle_{L^2(\widehat{\Omega}_{\varepsilon})}  =  \sum_{\mathbf{n} \in \widehat{\Pi}_{\varepsilon }} \int_{Y^{\mathbf{n}}_{\varepsilon }}\Big( \int_Y v \left( \varepsilon  \left[ \frac{x}{\varepsilon}\right]_Y+\varepsilon z,  \left\{ \frac{x}{\varepsilon}\right\} _Y \right)\,dz-\int_Y v(x,w)\,dw \Big)\phi  (x)\,dx ,
		\end{equation*}
	using change of variable $x=\varepsilon \mathbf{n}+\varepsilon y$, we get
		\begin{equation*}
			\begin{aligned}
					\big\langle  \widetilde{\mathcal{U}}_{\varepsilon} v -\langle v \rangle_Y ,\phi \big\rangle_{L^2(\widehat{\Omega}_{\varepsilon})}  = \varepsilon^d    \sum_{\mathbf{n} \in \widehat{\Pi}_{\varepsilon }} \int_Y V^{\mathbf{n}}_{\varepsilon}(y) \phi  (\varepsilon \mathbf{n}+\varepsilon y) \,dy ,
			\end{aligned}
		\end{equation*}
		where
		\begin{equation*}
			V^{\mathbf{n}}_{\varepsilon}(y):=\int_Y v(\varepsilon {\mathbf{n}}+\varepsilon z,y)\,dz-\int_Y v(\varepsilon {\mathbf{n}}+\varepsilon y ,w)\,dw .
		\end{equation*}
		Since $\int_Y V^{\mathbf{n}}_{\varepsilon}(y)\,dy= 0$, we obtain that 
		\begin{equation*}
			\begin{aligned}
			\big| \big\langle  \widetilde{\mathcal{U}}_{\varepsilon} v -\langle v \rangle_Y ,\phi \big\rangle_{L^2(\widehat{\Omega}_{\varepsilon})}   \big| 
				& \leq \varepsilon^d   \left| \sum_{\mathbf{n} \in \widehat{\Pi}_{\varepsilon }} \int_Y V^{\mathbf{n} }_{\varepsilon }(y)\Big(\phi  (\varepsilon \mathbf{n} +\varepsilon y)  - \int_Y \phi  (\varepsilon \mathbf{n} +\varepsilon s) \,ds\Big)\,dy \right|  \\
				& \leq  \varepsilon^d   \sum_{\mathbf{n} \in \widehat{\Pi}_{\varepsilon }}  \| V^{\mathbf{n}}_{\varepsilon }\|_{L^2(Y)} \left\| \phi  (\varepsilon \mathbf{n} +\varepsilon y)  - \int_Y \phi  (\varepsilon \mathbf{n} +\varepsilon s) \,ds\right\|_{L^2(Y)} \\
				& \leq C\varepsilon^{\frac{d}{2}+1}  \left( \sum_{\mathbf{n} \in \widehat{\Pi}_{\varepsilon }} \int_Y |V^{\mathbf{n}}_{\varepsilon}|^2 \right)^{1/2}  \| \phi \|_{H^1(\widehat{\Omega}_{\varepsilon})},
			\end{aligned}
		\end{equation*}
		where the last inequality follows from the Poincar\'{e}-Wirtinger inequality on $Y$. Now by definition, we have
		\begin{equation*}
			\begin{aligned}
				|V^{\mathbf{n}}_{\varepsilon}(y)|^2 \leq 2\int_Y |v(\varepsilon \mathbf{n}+\varepsilon z,y)|^2\,dz + 2\int_Y |v(\varepsilon \mathbf{n} +\varepsilon y,w)|^2\,dw,
			\end{aligned}
		\end{equation*}
		so
		\begin{equation*}
			\begin{aligned}
				\sum_{\mathbf{n} \in \widehat{\Pi}_{\varepsilon }} \int_Y |V^{\mathbf{n}}_{\varepsilon}(y)|^2\,dy &\leq  2\sum_{\mathbf{n} \in \widehat{\Pi}_{\varepsilon }} \int_Y \int_Y   |v(\varepsilon \mathbf{n}+\varepsilon z,y)|^2\,dz dy  \\
				& =2\varepsilon^{-d}  \sum_{\mathbf{n} \in \widehat{\Pi}_{\varepsilon }} \int_Y \int_{Y^{\mathbf{n}}_{\varepsilon}}   |v(x,y)|^2\,dx dy \\
				&= 2\varepsilon^{-d} \int_{\widehat{\Omega}_{\varepsilon} \times Y} |v(x,y)|^2\,dxdy.
			\end{aligned}
		\end{equation*}
		This yields the desired conclusion.
		\item[(b)] 
		We verify that $\| \widetilde{\mathcal{T}}_{\varepsilon} - \iota \|_{H^1(\widehat{\Omega}_{\varepsilon}  ) \rightarrow  L^2(\widehat{\Omega}_{\varepsilon}  \times Y)}\leq C \varepsilon$. Given $u \in H^1(\widehat{\Omega}_{\varepsilon})$, we have
		\begin{equation*}
				\int_{\widehat{\Omega}_{\varepsilon} \times Y} | \widetilde{\mathcal{T}}_{\varepsilon} u(x,y)-u(x) |^2\,dxdy\\
				= \varepsilon^d \sum_{\mathbf{n}\in \widehat{\Pi}_{\varepsilon}} \int_{Y \times Y}  \left|u( \varepsilon  \mathbf{n}+\varepsilon y ) - u(\varepsilon \mathbf{n}+ \varepsilon s)\right|^2\,dsdy ,
		\end{equation*}
		by Lemma \ref{lem_difference_gradient} for $D =Y$, we obtain that
		\begin{equation*}
			\begin{aligned}
				\int_{\widehat{\Omega}_{\varepsilon} \times Y} | \widetilde{\mathcal{T}}_{\varepsilon} u(x,y)-u(x) |^2\,dxdy&\leq C\varepsilon^{d+2} \sum_{\mathbf{n}\in \widehat{\Pi}_{\varepsilon}} \int_Y |\nabla u(\varepsilon\mathbf{n}+\varepsilon x)|^2 \,dx  \\
				& \leq C\varepsilon^2 \|  u\|_{H^1(\widehat{\Omega}_{\varepsilon})}^2,
			\end{aligned}
		\end{equation*}
	which is the desired conclusion.
		\item[(c)] 
		Lastly we verify that $\| \mathcal{P}_{\varepsilon} - \mathrm{Id}_{L^2(\widehat{\Omega}_{\varepsilon} )} \|_{H^1(\widehat{\Omega}_{\varepsilon} ) \rightarrow L^2(\widehat{\Omega}_{\varepsilon}  )}\leq C\varepsilon$. Given $u \in H^1(\widehat{\Omega}_{\varepsilon} )$, we compute 
		\begin{equation*}
				\| \mathcal{P}_{\varepsilon} u - u \|^2_{L^2(\widehat{\Omega}_{\varepsilon})} = \sum_{\mathbf{n} \in \widehat{\Pi}_{\varepsilon}}   \int_{Y^{\mathbf{n}}_{\varepsilon}}\Big| \varepsilon^{-d} \int_{Y^{\mathbf{n}}_{\varepsilon}} u(x')\,dx' - u(x) \Big|^2\,dx  \leq C\varepsilon^2  \| u\|_{H^1(\widehat{\Omega}_{\varepsilon})}^2,
		\end{equation*}
		where the last inequality follows from Poincar\'{e}-Wirtinger inequality on $Y$.
	\end{itemize}
\end{proof}

\section{Homogenization of the eigenvalues}\label{sec:homo eigenvalue}

This section is devoted to provide proofs for Theorem \ref{theorem 1}, Theorem \ref{thm:bloch spectrum} and Theorem \ref{thm:residual spectrum}. Here is an overviw of our method:

\textit{Step 1}. Fu and Jing proved in their work \cite{fujing_homogenization} that

\begin{lemma}\label{lemma Fu and Jing}
	There exists a constant $C>0$, depends only on $d, m, \mu,\lambda,\tau, \kappa, \Omega$ and $\omega$, such that
	\begin{equation}
		\Big\|  \mathcal{L}^{-1}_{\varepsilon,\delta} -  \widehat{\mathcal{L}}^{-1}_{\delta} - \delta^{-1} \mathcal{L}_{D_{\varepsilon}}^{-1}    \Big\|_{L^2(\Omega) \rightarrow L^2(\Omega)} \leq C\varepsilon^{1/2}.
	\end{equation}
\end{lemma}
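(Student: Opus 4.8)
The statement is quoted from \cite{fujing_homogenization}; the plan is to recall the structure of that proof, a quantitative two-scale expansion carried out in the energy norm naturally adapted to the high-contrast operator. Fix $f \in L^2(\Omega)$ and write $u_{\varepsilon,\delta} = \mathcal{L}_{\varepsilon,\delta}^{-1}f$, $u_0 = \widehat{\mathcal L}_\delta^{-1}f$, and let $w_\varepsilon = \mathcal{L}_{D_\varepsilon}^{-1}f$ be the cellwise solution of $-\operatorname{div}[A(\cdot/\varepsilon)\nabla w] = f$ in each $\omega_\varepsilon^{\mathbf n}$ with $w = 0$ on $\partial\omega_\varepsilon^{\mathbf n}$, extended by zero to $\Omega$; the target is $\| u_{\varepsilon,\delta} - u_0 - \delta^{-1}w_\varepsilon\|_{L^2(\Omega)} \le C\varepsilon^{1/2}\|f\|_{L^2(\Omega)}$. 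The structural observation is that on each inclusion the equation reads $-\operatorname{div}[A(\cdot/\varepsilon)\nabla u_{\varepsilon,\delta}] = \delta^{-1}f$, so there $u_{\varepsilon,\delta}$ equals $\delta^{-1}w_\varepsilon$ plus an $A(\cdot/\varepsilon)$-harmonic function carrying the boundary trace of $u_{\varepsilon,\delta}$; the extra term $\delta^{-1}w_\varepsilon$ — of size $O(\kappa\|f\|)$ with $\kappa = \delta^{-1}\varepsilon^2$ — is present precisely to capture this inclusion bump, which is not negligible once $\delta \lesssim \varepsilon^2$.

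I would then form the two-scale approximant
\begin{equation*}
	v_\varepsilon(x) = u_0(x) + \varepsilon\, \rho_\varepsilon(x)\, \chi_{j,\delta}\!\left(\tfrac{x}{\varepsilon}\right)\partial_j u_0(x) + \delta^{-1} w_\varepsilon(x) \in H^1_0(\Omega),
\end{equation*}
with $\chi_{j,\delta}$ the cell correctors of \eqref{eq:cellproblem} and $\rho_\varepsilon \in C_c^\infty(\Omega)$ a cutoff equal to $1$ off the boundary layer $\Sigma_\varepsilon = \{\operatorname{dist}(\cdot,\partial\Omega) < 2\varepsilon\}$ with $|\nabla\rho_\varepsilon| \le C\varepsilon^{-1}$. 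The crucial point is to run the energy estimate not in $H^1_0(\Omega)$, where the coercivity constant of $\mathcal{L}_{\varepsilon,\delta}$ degenerates like $\delta$, but in the weighted space with squared norm $\|v\|_{\varepsilon,\delta}^2 = \|v\|_{L^2(\Omega)}^2 + \|\nabla v\|_{L^2(\Omega_\varepsilon)}^2 + \delta\|\nabla v\|_{L^2(D_\varepsilon)}^2$, in which the bilinear form of $\mathcal{L}_{\varepsilon,\delta}$ is coercive with a constant independent of $\delta$; the residual $\mathcal{L}_{\varepsilon,\delta}v_\varepsilon - f$ is then estimated in the dual of this weighted norm.

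For the residual I would split $v_\varepsilon$ into its macroscopic part and the inclusion correction. The part $u_0 + \varepsilon\rho_\varepsilon\chi_{j,\delta}(\cdot/\varepsilon)\partial_j u_0$ is handled as in classical periodic homogenization of the coefficient $\Lambda_\delta(\cdot/\varepsilon)A(\cdot/\varepsilon)$: the cell equation and the defining formula for $\widehat A_\delta$ cancel the $O(1)$ contribution, leaving a divergence-form term of order $\varepsilon$ together with boundary-layer terms supported in $\Sigma_\varepsilon$ that are $O(\varepsilon^{1/2})$ in the dual norm, since $\|\nabla u_0\|_{L^2(\Sigma_\varepsilon)} \le C\varepsilon^{1/2}\|f\|_{L^2(\Omega)}$ by interior $H^2$-regularity of $u_0$ and the thinness of $\Sigma_\varepsilon$, the $\delta$-weighted contributions over $D_\varepsilon$ being controlled by the uniform-in-$\delta$ corrector bound $\|\nabla\chi_\delta\|_{L^2(Y\setminus\omega)} + \delta^{1/2}\|\nabla\chi_\delta\|_{L^2(\omega)} \le C$ (which follows from the cell energy identity). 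The inclusion correction $\delta^{-1}w_\varepsilon$ contributes, using $\Lambda_\delta^\varepsilon\equiv\delta$ on $D_\varepsilon$ and $\nabla w_\varepsilon\equiv 0$ off $D_\varepsilon$, exactly $-\operatorname{div}[\mathbbm{1}_{D_\varepsilon}A(\cdot/\varepsilon)\nabla w_\varepsilon]$; testing against $\phi$ and integrating by parts cellwise with $w_\varepsilon\in H^1_0(\omega_\varepsilon^{\mathbf n})$ leaves $\int_{D_\varepsilon}f(\phi-\phi_0)$, where $\phi_0$ is the $A(\cdot/\varepsilon)$-harmonic extension of the trace of $\phi$, and this is $O(\varepsilon)$ in the dual norm by the rescaled Poincar\'e inequality. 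Plugging these into the weighted energy estimate gives $\|u_{\varepsilon,\delta}-v_\varepsilon\|_{\varepsilon,\delta}\le C\varepsilon^{1/2}\|f\|_{L^2(\Omega)}$, hence the same in $L^2(\Omega)$; finally $\|\varepsilon\rho_\varepsilon\chi_{j,\delta}(\cdot/\varepsilon)\partial_j u_0\|_{L^2(\Omega)}\le C\varepsilon\|f\|_{L^2(\Omega)}$ removes the corrector and yields the claim.

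I expect the main obstacle to be the interplay between the boundary layer at $\partial\Omega$ — the source of the $\varepsilon^{1/2}$ rather than $\varepsilon$ rate, since the oscillating corrector cannot be cut off to $0$ for free — and the choice of the weighted norm that prevents the constants from degenerating as $\delta\to 0$; the accompanying point of care is the uniform ellipticity, boundedness and interior $H^2$-regularity of $\widehat{\mathcal L}_\delta$ together with the uniform corrector bounds, which is where the stated dependence of $C$ on $\kappa = \delta^{-1}\varepsilon^2$ comes in. Since all details are carried out in \cite{fujing_homogenization}, I would not reproduce the computations here.
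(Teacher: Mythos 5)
The paper does not actually prove this lemma: it is imported verbatim from \cite{fujing_homogenization} and used as a black box, so there is no internal proof to compare your sketch against. Your outline --- the two-scale ansatz $u_0+\varepsilon\rho_\varepsilon\chi_\delta(\cdot/\varepsilon)\nabla u_0+\delta^{-1}w_\varepsilon$, the boundary cutoff as the source of the $\varepsilon^{1/2}$ rate, the uniform-in-$\delta$ cell energy bound, and coercivity in the $\delta$-weighted energy norm --- is the standard architecture for such results, and your reading of $\mathcal{L}_{D_\varepsilon}^{-1}$ as the cellwise Dirichlet solver on the inclusions is consistent with the commutation relation $\varepsilon^{-2}\mathcal{L}_{D_\varepsilon}^{-1}\mathcal{U}_\varepsilon=\mathcal{U}_\varepsilon\mathcal{L}_{\omega,y}^{-1}$ used later in the paper.

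There is, however, one step that does not close as written. The inclusion residual you isolate, $\int_{D_\varepsilon}f(\phi-\phi_0)$, is bounded via the rescaled Poincar\'e inequality by $C\varepsilon\|f\|_{L^2(\Omega)}\|\nabla\phi\|_{L^2(D_\varepsilon)}$, i.e.\ by the gradient of the test function \emph{inside} the inclusions. But the weighted norm you (correctly) insist on for uniform coercivity only controls $\delta^{1/2}\|\nabla\phi\|_{L^2(D_\varepsilon)}$, so in the corresponding dual norm this term is of size $\varepsilon\delta^{-1/2}=\kappa^{1/2}$, not $O(\varepsilon)$; for fixed $\kappa$ the naive weighted energy estimate then yields only $O(1)$, and the same difficulty affects the flux-corrector terms supported in $D_\varepsilon$. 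Closing the argument requires an additional structural input: since the $\delta^{-1}$ bumps in $u_{\varepsilon,\delta}$ and in $v_\varepsilon$ cancel, the error $\phi=u_{\varepsilon,\delta}-v_\varepsilon$ satisfies $-\mathrm{div}\big(A(\cdot/\varepsilon)\nabla\phi\big)=O(1)$ inside each inclusion, whence $\|\nabla\phi\|_{L^2(D_\varepsilon)}\leq C\|\nabla\phi\|_{L^2(\Omega_\varepsilon)}+C\varepsilon\|f\|_{L^2(\Omega)}$ by the harmonic-extension (perforated-domain) estimate; alternatively one can run a duality argument exploiting self-adjointness. Either way, this is precisely the point where the proof earns the $\delta^{-1}\mathcal{L}_{D_\varepsilon}^{-1}$ correction, and it should not be presented as a routine consequence of the weighted energy estimate.
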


Lemma \ref{lemma Fu and Jing} suggests that the spectrum of $\mathcal{L}^{-1}_{\varepsilon,\delta}$ and 
$\widehat{\mathcal{L}}_{\delta}^{-1} + \delta^{-1} \mathcal{L}_{D_{\varepsilon}}^{-1}$ are nearly same. Recall that  $\mathcal{T}_{\varepsilon}$ is the adjoint of  $\mathcal{U}_{\varepsilon}$ and $\mathcal{U}_{\varepsilon}\circ \mathcal{T}_{\varepsilon} = \mathbbm{1}_{\Omega}$, the lifted (conjugated) operator $\mathcal{T}_{\varepsilon} \big( \widehat{\mathcal{L}}_{\delta}^{-1} + \delta^{-1} \mathcal{L}_{D_{\varepsilon}}^{-1} \big) \mathcal{U}_{\varepsilon}$ is self-adjoint and exhibits an identical spectrum to $\widehat{\mathcal{L}}_{\delta}^{-1} + \delta^{-1} \mathcal{L}_{D_{\varepsilon}}^{-1}$. These insights guide us towards studying the spectrum of the lifted operator.

We then utilize the quantitative periodic unfolding method introduced in Section \ref{sec: periodic unfolding method} to demonstrate that the lifted operator approaches $\widehat{\mathcal{L}}_{\delta}^{-1} \mathbbm{1}_{\Omega} \langle \cdot \rangle_Y + \kappa \mathcal{P}_{\varepsilon} \mathcal{L}_{\omega,y}^{-1}$ in operator norms with an error $O(\varepsilon)$. This validates the proof of Theorem \ref{theorem 1}.

\textit{Step 2}.  Proving Theorem \ref{thm:bloch spectrum} is straightforward. In order to prove Theorem \ref{thm:residual spectrum}, intuitively we infer that the spectrum of $\widehat{\mathcal{L}}_{\delta}^{-1}  \mathbbm{1}_{\Omega} \langle \cdot \rangle_Y + \kappa \mathcal{P}_{\varepsilon} \mathcal{L}_{\omega,y}^{-1}$ converges to that of $\widehat{\mathcal{L}}_{\delta}^{-1}   \mathbbm{1}_{\Omega} \langle \cdot \rangle_Y + \kappa \mathbbm{1}_{\Omega} \mathcal{L}_{\omega,y}^{-1}$ by Proposition \ref{quantitative_estimates} (c). Regrettably, the later (limit) operator is not compact, and the operator norm of their difference does not converges to zero. To circumvent this issue, we construct an auxillary operator $\widehat{\mathcal{L}}_{\delta}^{-1}  +B_{\varepsilon,\delta,\lambda} $ on $H^{-1}(\Omega)$ with a new inner product $\langle \cdot,\cdot \rangle_{\delta}$. By showing that $\| B_{\varepsilon,\delta,\lambda}  \|_{\delta} \rightarrow 0$, we successfully prove  Theorem \ref{thm:residual spectrum}.

This process is summarized in the following diagram:

\begin{tikzcd}
 & \mathcal{T}_{\varepsilon} \big( \widehat{\mathcal{L}}_{\delta}^{-1} + \delta^{-1} \mathcal{L}_{D_{\varepsilon}}^{-1} \big) \mathcal{U}_{\varepsilon} \arrow[r,"\varepsilon \rightarrow 0"]
	& \widehat{\mathcal{L}}_{\delta}^{-1}  \mathbbm{1}_{\Omega} \langle \cdot \rangle_Y + \kappa \mathcal{P}_{\varepsilon} \mathcal{L}_{\omega,y}^{-1} \arrow[r,"\varepsilon \rightarrow 0"] \arrow[d,"\mathrm{down}"]
	& \widehat{\mathcal{L}}_{\delta}^{-1} \mathbbm{1}_{\Omega} \langle \cdot \rangle_Y + \kappa  \mathbbm{1}_{\Omega} \mathcal{L}_{\omega,y}^{-1} \arrow[d,"\mathrm{down}"]  \\
	&\mathcal{L}_{\varepsilon,\delta}^{-1}   \approx 
	\widehat{\mathcal{L}}_{\delta}^{-1} + \delta^{-1} \mathcal{L}_{D_{\varepsilon}}^{-1}\arrow[u,"\mathrm{lift}"] & \widehat{\mathcal{L}}_{\delta}^{-1}  +B_{\varepsilon,\delta,\lambda} \arrow[r,"\varepsilon \rightarrow 0"] &  \widehat{\mathcal{L}}_{\delta}^{-1} 
\end{tikzcd}

We now give the proof of Theorem \ref{theorem 1}.

\noindent \textit{Proof of Theorem \ref{theorem 1}.}
Since $\mathcal{T}_{\varepsilon}$ is the adjoint of $\mathcal{U}_{\varepsilon}$ and $\mathcal{U}_{\varepsilon}\circ \mathcal{T}_{\varepsilon} = \mathbbm{1}_{\Omega}$, the spectrum of $\mathcal{L}_{\varepsilon,\delta}^{-1} $ coincides with the spectrum of $\mathcal{T}_{\varepsilon} \mathcal{L}_{\varepsilon,\delta}^{-1}  \mathcal{U}_{\varepsilon}$. By Lemma \ref{lemma Fu and Jing}, we obtain that 
	\begin{equation*}
		\big\| \mathcal{T}_{\varepsilon} \mathcal{L}_{\varepsilon,\delta}^{-1}  \mathcal{U}_{\varepsilon} - \mathcal{T}_{\varepsilon} \big( \widehat{\mathcal{L}}_{\delta}^{-1} + \delta^{-1} \mathcal{L}_{D_{\varepsilon}}^{-1} \big)  \mathcal{U}_{\varepsilon} \big\|_{L^2(\widehat{\Omega}_{\varepsilon} \times Y) \rightarrow L^2(\widehat{\Omega}_{\varepsilon}  \times Y)} \leq C\varepsilon^{1/2}.
	\end{equation*}
	We first observe that the commutate law $\varepsilon^{-2} \mathcal{L}_{D_{\varepsilon}}^{-1}  \mathcal{U}_{\varepsilon}= \mathcal{U}_{\varepsilon} \mathcal{L}_{\omega,y}^{-1} $ holds, so $\mathcal{T}_{\varepsilon} \big(  \delta^{-1} \mathcal{L}_{D_{\varepsilon}}^{-1} \big)  \mathcal{U}_{\varepsilon}= \kappa \mathcal{P}_{\varepsilon} \mathcal{L}_{\omega,y}^{-1}$. Moreover, by Proposition \ref{quantitative_estimates} (a) and (b), we have
	\begin{equation*}
		\begin{aligned}
		\| &  \mathcal{T}_{\varepsilon} \widehat{\mathcal{L}}_{\delta}^{-1} \mathcal{U}_{\varepsilon} - \widehat{\mathcal{L}}_{\delta}^{-1}  \mathbbm{1}_{\Omega} \langle \cdot \rangle_Y  \|_{L^2(\widehat{\Omega}_{\varepsilon}  \times Y \rightarrow L^2(\widehat{\Omega}_{\varepsilon}  \times Y))} \\
		& \leq 	\|   ( \widetilde{\mathcal{T}}_{\varepsilon} - \iota)\widehat{\mathcal{L}}_{\delta}^{-1} \mathcal{U}_{\varepsilon}   \|_{L^2(\widehat{\Omega}_{\varepsilon} \times Y) \rightarrow L^2(\widehat{\Omega}_{\varepsilon}  \times Y)} +  \| \widehat{\mathcal{L}}_{\delta}^{-1}\mathbbm{1}_{\Omega} ( \widetilde{\mathcal{U}}_{\varepsilon}  -  \langle \cdot \rangle_Y  ) \|_{L^2(\widehat{\Omega}_{\varepsilon}  \times Y) \rightarrow L^2(\Omega  )} \\
		& \leq C\varepsilon.
		\end{aligned}
	\end{equation*}
	These estimates, combine with the standard stability theorem for self-adjoint operators, yield the conclusion \eqref{eigenrate1}.
\hfill $\square$
\begin{remark}
	We note that the rate $O(\varepsilon^{1/2})$ follows from Lemma \ref{lemma Fu and Jing}, in fact, by exploring the regularity property of $\mathcal{L}_{\varepsilon,\delta}$, one may show the optimal $L^2$ convergence rate is $O(\varepsilon)$, as studied in \cite{shen_periodic_2018,shen_large-scale_2021}. We may study this aspect in a future work.
\end{remark}

\noindent \textit{Proof of Theorem \ref{thm:bloch spectrum}}.
	Assume that there exists a nonzero $\psi$ such that $\int_{\omega} \psi(y)\,dy=0$ and
	\begin{equation*}
		\mathcal{L}_{\omega,y}^{-1}\, \psi=\lambda  \psi,
	\end{equation*}
	we define $|\widehat{\Pi}_{\varepsilon}|$ numbers of independent functions $u^{\mathbf{n}} (x,y)= \mathbbm{1}_{Y^{\mathbf{n}}_{\varepsilon}}(x) \psi(y)$, where $\mathbf{n} \in \widehat{\Pi}_{\varepsilon}$, then
	\begin{equation*}
		\big( \widehat{\mathcal{L}}_{\delta}^{-1}  \mathbbm{1}_{\Omega} \langle \cdot \rangle_Y + \kappa \mathcal{P}_{\varepsilon} \mathcal{L}_{\omega,y}^{-1} \big) u^{\mathbf{n}} =  \kappa \mathcal{P}_{\varepsilon} \mathcal{L}_{\omega,y}^{-1}  u^{\mathbf{n}} =\kappa  \lambda u^{\mathbf{n}}  ,
	\end{equation*}
	which shows that \eqref{sigma1multi} is contained in the pure Bloch spectrum.
	
	Conversely, for any $\lambda $ in the pure Bloch spectrum, by definition, there exists an eigenfunction $u \in L^2(\widehat{\Omega}_{\varepsilon} \times Y)$ such that $\int_Y u(x,y)\,dy=0$ and 
	\begin{equation*}
			\big( \widehat{\mathcal{L}}_{\delta}^{-1} \mathbbm{1}_{\Omega} \langle \cdot \rangle_Y + \kappa \mathcal{P}_{\varepsilon} \mathcal{L}_{\omega,y}^{-1} \big) u=\lambda u,
	\end{equation*}
	which yields that $\kappa \mathcal{P}_{\varepsilon} \mathcal{L}^{-1}_{\omega,y} u = \lambda u$.
	Apply the projection $\mathcal{P}_{\varepsilon}$ on both sides and note that $\mathcal{P}_{\varepsilon}$ commutates with $\mathcal{L}_{\omega,y}^{-1}$, we get that 
	\begin{equation}\label{formula5.3}
		\kappa \mathcal{L}^{-1}_{\omega,y} \mathcal{P}_{\varepsilon} u = \lambda \mathcal{P}_{\varepsilon} u .
	\end{equation}
We write $\mathcal{P}_{\varepsilon} u$ in the form of
	\begin{equation}\label{form_u}
		\mathcal{P}_{\varepsilon} u(x,y)= \sum_{\mathbf{n} \in \widehat{\Pi}_{\varepsilon}} \mathbbm{1}_{Y^{\mathbf{n}}_{\varepsilon}}(x) u^{\mathbf{n}}_{\varepsilon}(y), \qquad\mathrm{where}  \ \langle u^{\mathbf{n}}_{\varepsilon}\rangle_Y =0,
	\end{equation}
	then \eqref{formula5.3} implies that 
	\begin{equation*}
		-\mathcal{L}_{\omega,y}^{-1}  u^{\mathbf{n}}_{\varepsilon} =\kappa^{-1} \lambda u^{\mathbf{n}}_{\varepsilon},\quad \langle u^{\mathbf{n}}_{\varepsilon}\rangle_Y =0, \quad \mathrm{for}\ \mathrm{any}\ \varepsilon>0 \ \mathrm{and} \ \mathbf{n}\in \widehat{\Pi}_{\varepsilon}.
	\end{equation*}
	Therefore, there exists $i \in \mathbb{N}$ such that $ \lambda = \kappa \alpha_i$. This completes the proof.
\hfill $\square$

Then we have the following proposition.

\begin{proposition}\label{sigma_2_charac}
	$\lambda\in \mathbb{C}$ is in the residual spectrum if and only if $\gamma_{\kappa}(\lambda)^{-1} $ is the (matrix-valued) eigenvalue of $\widehat{\mathcal{L}}_{\delta}^{-1} +B_{\varepsilon,\delta,\lambda}$, where
	$B_{\varepsilon,\delta,\lambda}$ is defined by
	\begin{equation*}
		B_{\varepsilon,\delta,\lambda} := \big(  I_m - \lambda^{-1}\gamma_{\kappa}(\lambda)^{-1}  \big) \mathbbm{1}_{\Omega} (\mathcal{P}_{\varepsilon} - \mathrm{Id}) \widehat{\mathcal{L}}_{\delta}^{-1} .
	\end{equation*}
	Moreover, the multiplicity of $\lambda$ is same as the multiplicity of $\gamma_{\kappa}(\lambda)^{-1} $.
\end{proposition}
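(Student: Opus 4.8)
The plan is, for each $\lambda$ lying in the resolvent set of $\kappa\mathcal{L}_{\omega,y}^{-1}$ and with $\gamma_{\kappa}(\lambda)$ invertible---precisely the range in which $B_{\varepsilon,\delta,\lambda}$ and the stated equation are meaningful, so this is harmless as a standing hypothesis---to build an explicit linear isomorphism between the residual eigenspace at level $\lambda$ of $\widehat{\mathcal{L}}_{\delta}^{-1}\mathbbm{1}_{\Omega}\langle\cdot\rangle_Y+\kappa\mathcal{P}_{\varepsilon}\mathcal{L}_{\omega,y}^{-1}$ and the solution space of $(\widehat{\mathcal{L}}_{\delta}^{-1}+B_{\varepsilon,\delta,\lambda})v=\gamma_{\kappa}(\lambda)^{-1}v$ in $[L^2(\Omega)]^m$; the equivalence of spectra and the equality of multiplicities then both follow from such an isomorphism.

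For the forward direction I would start from an eigenfunction $u$ with $v:=\langle u\rangle_Y\not\equiv0$, so that $\lambda u=\widehat{\mathcal{L}}_{\delta}^{-1}(\mathbbm{1}_{\Omega}v)+\kappa\mathcal{P}_{\varepsilon}\mathcal{L}_{\omega,y}^{-1}u$, and write $w:=\widehat{\mathcal{L}}_{\delta}^{-1}(\mathbbm{1}_{\Omega}v)$, a function of $x$ only. Applying $\mathcal{P}_{\varepsilon}$ and using $\mathcal{P}_{\varepsilon}\mathcal{L}_{\omega,y}^{-1}=\mathcal{L}_{\omega,y}^{-1}\mathcal{P}_{\varepsilon}$ (as in the proof of Theorem~\ref{thm:bloch spectrum}) together with the cellwise formula \eqref{compo_projection}, the projected equation decouples over the cells: with $\mathcal{P}_{\varepsilon}u=\sum_{\mathbf{n}}\mathbbm{1}_{Y^{\mathbf{n}}_{\varepsilon}}u^{\mathbf{n}}$ and $\mathcal{P}_{\varepsilon}w=\sum_{\mathbf{n}}\mathbbm{1}_{Y^{\mathbf{n}}_{\varepsilon}}c^{\mathbf{n}}$ ($c^{\mathbf{n}}\in\mathbb{R}^m$ the cell average of $w$), one gets $(\kappa\mathcal{L}_{\omega,y}^{-1}-\lambda)u^{\mathbf{n}}=-c^{\mathbf{n}}$ in $L^2(Y)$, hence $u^{\mathbf{n}}=-(\kappa\mathcal{L}_{\omega,y}^{-1}-\lambda)^{-1}[c^{\mathbf{n}}]$. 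Integrating in $y$ and invoking the definition of $\gamma_{\kappa}$ gives $\langle u^{\mathbf{n}}\rangle_Y=\gamma_{\kappa}(\lambda)c^{\mathbf{n}}$ and $\kappa\langle\mathcal{L}_{\omega,y}^{-1}u^{\mathbf{n}}\rangle_Y=\lambda\langle u^{\mathbf{n}}\rangle_Y-c^{\mathbf{n}}=(\lambda\gamma_{\kappa}(\lambda)-I_m)c^{\mathbf{n}}$. Next I would apply $\langle\cdot\rangle_Y$ to the eigenequation itself: since $\langle w\rangle_Y=w$ and $\kappa\langle\mathcal{P}_{\varepsilon}\mathcal{L}_{\omega,y}^{-1}u\rangle_Y=\sum_{\mathbf{n}}\mathbbm{1}_{Y^{\mathbf{n}}_{\varepsilon}}\kappa\langle\mathcal{L}_{\omega,y}^{-1}u^{\mathbf{n}}\rangle_Y=(\lambda\gamma_{\kappa}(\lambda)-I_m)\mathcal{P}_{\varepsilon}w$, this yields $\lambda v=\lambda\gamma_{\kappa}(\lambda)\mathcal{P}_{\varepsilon}w+(\mathrm{Id}-\mathcal{P}_{\varepsilon})w$; dividing by $\lambda$, multiplying by $\gamma_{\kappa}(\lambda)^{-1}$ and substituting $\mathcal{P}_{\varepsilon}w=w-(\mathrm{Id}-\mathcal{P}_{\varepsilon})w$ turns this, on $\Omega$, into
$$\gamma_{\kappa}(\lambda)^{-1}v=w-\big(I_m-\lambda^{-1}\gamma_{\kappa}(\lambda)^{-1}\big)(\mathrm{Id}-\mathcal{P}_{\varepsilon})w=\big(\widehat{\mathcal{L}}_{\delta}^{-1}+B_{\varepsilon,\delta,\lambda}\big)(\mathbbm{1}_{\Omega}v),$$
which is the stated equation with eigenfunction $\mathbbm{1}_{\Omega}v$; this is nonzero since $\mathbbm{1}_{\Omega}v\equiv0$ would force $w\equiv0$, all $c^{\mathbf{n}}=0$, $\mathcal{P}_{\varepsilon}u\equiv0$ and then $\lambda v\equiv0$, contradicting $v\not\equiv0$ and $\lambda\ne0$.

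For the converse, given $v\in[L^2(\Omega)]^m\setminus\{0\}$ with $(\widehat{\mathcal{L}}_{\delta}^{-1}+B_{\varepsilon,\delta,\lambda})v=\gamma_{\kappa}(\lambda)^{-1}v$, I would put $w:=\widehat{\mathcal{L}}_{\delta}^{-1}v$, let $c^{\mathbf{n}}$ be its cell average, $u^{\mathbf{n}}:=-(\kappa\mathcal{L}_{\omega,y}^{-1}-\lambda)^{-1}[c^{\mathbf{n}}]$, and set
$$u(x,y):=\lambda^{-1}\Big(w(x)+\kappa\sum_{\mathbf{n}\in\widehat{\Pi}_{\varepsilon}}\mathbbm{1}_{Y^{\mathbf{n}}_{\varepsilon}}(x)\,\mathcal{L}_{\omega,y}^{-1}u^{\mathbf{n}}(y)\Big).$$
Using $c^{\mathbf{n}}+\kappa\mathcal{L}_{\omega,y}^{-1}u^{\mathbf{n}}=\lambda u^{\mathbf{n}}$ one checks $\mathcal{P}_{\varepsilon}u=\sum_{\mathbf{n}}\mathbbm{1}_{Y^{\mathbf{n}}_{\varepsilon}}u^{\mathbf{n}}$, hence $\kappa\mathcal{P}_{\varepsilon}\mathcal{L}_{\omega,y}^{-1}u=\lambda u-w$; rerunning the $\langle\cdot\rangle_Y$ computation with the hypothesis inserted shows $\mathbbm{1}_{\Omega}\langle u\rangle_Y=v$, so $w=\widehat{\mathcal{L}}_{\delta}^{-1}(\mathbbm{1}_{\Omega}\langle u\rangle_Y)$ and $u$ is a residual eigenfunction at level $\lambda$. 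Finally, the maps $u\mapsto\mathbbm{1}_{\Omega}\langle u\rangle_Y$ and $v\mapsto u$ are mutually inverse---starting from a residual eigenfunction $u$ and reconstructing from $v=\mathbbm{1}_{\Omega}\langle u\rangle_Y$ returns $u$, because $\lambda u=w+\kappa\sum_{\mathbf{n}}\mathbbm{1}_{Y^{\mathbf{n}}_{\varepsilon}}\mathcal{L}_{\omega,y}^{-1}u^{\mathbf{n}}$ with exactly the same $u^{\mathbf{n}}$ (these being the cell components of $\mathcal{P}_{\varepsilon}u$)---so the two multiplicities coincide.

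I do not expect a genuine analytic obstacle: the argument is an exact algebraic reduction, driven by the decoupling of the projected equation over cells. The work lies in careful bookkeeping---treating $\mathcal{P}_{\varepsilon}$ as cell averaging acting only in the $x$ variable, keeping track of the cut-offs $\mathbbm{1}_{\Omega}$ (which is exactly what makes the key identity hold on $\Omega$, even though $\mathcal{P}_{\varepsilon}w$ can be supported on cells straddling $\partial\Omega$), and handling the embedding of functions of $x$ into $[L^2(\widehat{\Omega}_{\varepsilon}\times Y)]^m$---and in using, in the right places, the commutation $\mathcal{P}_{\varepsilon}\mathcal{L}_{\omega,y}^{-1}=\mathcal{L}_{\omega,y}^{-1}\mathcal{P}_{\varepsilon}$, the constancy in $y$ of $\widehat{\mathcal{L}}_{\delta}^{-1}(\mathbbm{1}_{\Omega}v)$, and the standing invertibility of $\gamma_{\kappa}(\lambda)$.
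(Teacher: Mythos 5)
Your proposal is correct and follows essentially the same route as the paper: apply $\mathcal{P}_{\varepsilon}$ to the eigenequation, use the commutation with $\mathcal{L}_{\omega,y}^{-1}$ to solve the resulting $y$-equation (your cellwise $u^{\mathbf{n}}=-(\kappa\mathcal{L}_{\omega,y}^{-1}-\lambda)^{-1}[c^{\mathbf{n}}]$ is exactly the paper's $\mathcal{P}_{\varepsilon}u=b_{\kappa,\lambda}(y)\,\mathcal{P}_{\varepsilon}\widehat{\mathcal{L}}_{\delta}^{-1}\mathbbm{1}_{\Omega}\langle u\rangle_Y$ written fiber by fiber), substitute back, integrate in $y$ to produce the $\widehat{\mathcal{L}}_{\delta}^{-1}+B_{\varepsilon,\delta,\lambda}$ equation, and reconstruct $u$ from $f$ by the same explicit formula for the converse. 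Your packaging of the multiplicity claim as a pair of mutually inverse maps (rather than the paper's two separate linear-independence arguments) and your explicit standing hypothesis that $\lambda$ avoids the Bloch spectrum are harmless refinements, not a different method.
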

\begin{proof}
	Let $\lambda \in \mathbb{C}$ is in the residual spectrum, by definition, there exists an eigenfunction $u$ such that $\int_Y u(x,y)\,dy \neq 0$ and
	\begin{equation}\label{eigen_equation_5}
	\big( \widehat{\mathcal{L}}_{\delta}^{-1}  \mathbbm{1}_{\Omega}\langle \cdot \rangle_Y + \kappa \mathcal{P}_{\varepsilon} \mathcal{L}_{\omega,y}^{-1} \big) u=\lambda u,
\end{equation}
	apply $\mathcal{P}_{\varepsilon}$ on both sides of above, and note that $\mathcal{P}_{\varepsilon}$ commutates with $\mathcal{L}_{\omega,y}^{-1}$,  we get
	\begin{equation*}
		  \kappa  \mathcal{L}_{\omega,y}^{-1}\mathcal{P}_{\varepsilon}  u - \lambda \mathcal{P}_{\varepsilon}u = -\mathcal{P}_{\varepsilon}  \widehat{\mathcal{L}}_{\delta}^{-1}  \mathbbm{1}_{\Omega} \langle u\rangle_Y ,
	\end{equation*}
	fixed $x$, we though the above equation only depends on $y$, then we solve it as
	\begin{equation}\label{rep_W_u}
		\mathcal{P}_{\varepsilon}u(x,y) = b_{\kappa,\lambda}(y) 	\mathcal{P}_{\varepsilon}  \widehat{\mathcal{L}}_{\delta}^{-1}  \mathbbm{1}_{\Omega} \langle u\rangle_Y (x),
	\end{equation}
where $b_{\kappa,\lambda}(y) = - (\kappa  \mathcal{L}_{\omega,y}^{-1} - \lambda   )^{-1} [I_m](y)$. Now substitute \eqref{rep_W_u} to \eqref{eigen_equation_5} we obtain
	\begin{equation}\label{int_1}
		\widehat{\mathcal{L}}_{\delta}^{-1} \mathbbm{1}_{\Omega} \langle u\rangle_Y(x) + (\lambda b_{\kappa,\lambda}(y) -I_m)	\mathcal{P}_{\varepsilon}\widehat{\mathcal{L}}_{\delta}^{-1} \mathbbm{1}_{\Omega} \langle u\rangle_Y (x) = \lambda u(x,y),
	\end{equation}
	then integrate \eqref{int_1} with respect to $y$ in $Y$, and apply $\mathbbm{1}_{\Omega}$, we get that
	\begin{equation*}
		\widehat{\mathcal{L}}_{\delta}^{-1}  \mathbbm{1}_{\Omega}  \langle u\rangle_Y(x) + \big(  I_m- \lambda^{-1}\gamma(\kappa,\lambda)^{-1}  \big) \mathbbm{1}_{\Omega}(\mathcal{P}_{\varepsilon} - \mathrm{Id}) \widehat{\mathcal{L}}_{\delta}^{-1} \mathbbm{1}_{\Omega} \langle u\rangle_Y (x) =\gamma_{\kappa}(\lambda)^{-1} \mathbbm{1}_{\Omega} \langle u\rangle_Y (x) .
	\end{equation*}
	This shows that $\gamma_{\kappa}(\lambda)^{-1} $ is the (matrix-valued) eigenvalue of $\widehat{\mathcal{L}}_{\delta}^{-1} +B_{\varepsilon,\delta,\lambda}$.

	We assume that $u_1,\cdots,u_n$ are independent eigenfunctions of $\big( \widehat{\mathcal{L}}_{\delta}^{-1}  \mathbbm{1}_{\Omega}\langle \cdot \rangle_Y + \kappa \mathcal{P}_{\varepsilon} \mathcal{L}_{\omega,y}^{-1} \big)$ for eigenvalue $\lambda$ and $a_1 \mathbbm{1}_{\Omega}\langle u_1 \rangle_Y + \cdots a_n  \mathbbm{1}_{\Omega} \langle u_n \rangle_Y=0$, then by \eqref{int_1} we obtain that $a_1 u_1 + \cdots a_n u_n=0$, hence $a_i=0$. This shows that the multiplicity of $\lambda$ is not larger than that of $\gamma(\kappa,\lambda)^{-1} $.
	
	Conversely, suppose that $\gamma(\kappa,\lambda)^{-1} $ is an eigenvalue of $\widehat{\mathcal{L}}_{\delta}^{-1}  +B_{\varepsilon,\delta,\lambda}$, there exists a nonzero $f \in L^2(\Omega)$ such that
	\begin{equation*}
		\widehat{\mathcal{L}}_{\delta}^{-1} f +B_{\varepsilon,\delta,\lambda} f = \gamma(\kappa,\lambda)^{-1}  f,
	\end{equation*}
	we define 
	\begin{equation*}
		u(x,y):=\lambda^{-1}  \widehat{\mathcal{L}}_{\delta}^{-1} f (x) + \big( b_{\kappa, \lambda}(y)-\lambda^{-1} I_m \big)  \mathcal{P}_{\varepsilon}  \widehat{\mathcal{L}}_{\delta}^{-1} f (x),
	\end{equation*}
	then $\mathbbm{1}_{\Omega} \langle u\rangle_Y =f$, in particular, $\langle u \rangle_Y \neq 0$. Moreover, 
	\begin{equation*}
		\begin{aligned}
		&\widehat{\mathcal{L}}_{\delta}^{-1}\mathbbm{1}_{\Omega}  \langle u \rangle_Y - \kappa  \mathcal{P}_{\varepsilon} \mathcal{L}_{\omega,y}^{-1} u \\
		& = \widehat{\mathcal{L}}_{\delta}^{-1}  f (x)- \kappa\lambda^{-1} \mathcal{L}_{\omega,y}^{-1}[I_m](y) \mathcal{P}_{\varepsilon}	\widehat{\mathcal{L}}_{\delta}^{-1}   f(x)  -\kappa \big( \mathcal{L}_{\omega}^{-1}b_{\kappa,\lambda}(y)-\lambda^{-1}\mathcal{L}_{\omega}^{-1}[I_m](y) \big)  \mathcal{P}_{\varepsilon}	\widehat{\mathcal{L}}_{\delta}^{-1}  f (x)\\
			& =   \widehat{\mathcal{L}}_{\delta}^{-1}  f (x) + \big(\lambda b_{\kappa,\lambda}(y) -I_m \big) \mathcal{P}_{\varepsilon}\widehat{\mathcal{L}}_{\delta}^{-1}  f (x)  \\
			& = \lambda u,
		\end{aligned}
	\end{equation*}
	which shows that $\lambda $ is in the residual spectrum. Moreover, if $f_1,\cdots,f_n$ are independent eigenfunctions, then of course $ u_1,\cdots,u_n$ are independent since $f =\mathbbm{1}_{\Omega}   \langle u \rangle_Y$, so the multiplicity of $\lambda$ is not smaller than the multiplicity of $\gamma_{\kappa}(\lambda)^{-1}$. The proof is complete.
\end{proof}

\noindent \textit{Proof of Theorem \ref{thm:residual spectrum}}. We define an inner product on $H^{-1}(\Omega)$
by
\begin{equation}
	\langle u,v \rangle_{\delta} := \langle  u, \widehat{\mathcal{L}}_{\delta}^{-1} v \rangle_{H^{-1}(\Omega),H_0^1(\Omega)},
\end{equation}
for $u,v \in H^{-1}(\Omega)$. We show that $\| \cdot \|_{\delta}$ is equivalent to $\|\cdot \|_{H^{-1}(\Omega)}$. For any $u \in H^{-1}(\Omega)$, since $\widehat{\mathcal{L}}_{\delta}^{-1}$ is a bijection from $H^{-1}(\Omega)$ to $H_0^1(\Omega)$, we have
\begin{equation*}
	\| u \|_{H^{-1}(\Omega)} \sim \| \widehat{\mathcal{L}}_{\delta}^{-1}  u\|_{H_0^1(\Omega)}.
\end{equation*}
It then follows from the definition of $\| \cdot \|_{\delta}$ that
\begin{equation}\label{norm equiv 1}
	\| u \|_{\delta}^2 \leq \| u \|_{H^{-1}(\Omega)} \| \widehat{\mathcal{L}}_{\delta}^{-1}  u \|_{H_0^1(\Omega)} \leq C\| u \|_{H^{-1}(\Omega)}^2,
\end{equation}
which yields that
\begin{equation}\label{norm equiv 2}
	\| u \|_{H^{-1}(\Omega) } = \sup_{0\neq v \in H_0^1(\Omega)} \frac{\langle u,v \rangle_{H^{-1}(\Omega),H_0^1(\Omega)}}{\| v\|_{H^1(\Omega)}}  =  \sup_{0\neq w \in H^{-1}(\Omega)} \frac{\langle u,  \widehat{\mathcal{L}}_{\delta}^{-1} w \rangle_{H^{-1}(\Omega),H_0^1(\Omega)}}{\| w\|_{H^{-1}(\Omega)}} \leq \| u \|_{\delta}.
\end{equation}
\eqref{norm equiv 1} and \eqref{norm equiv 2} imply that $\| \cdot \|_{\delta}$ is equivalent to $\|\cdot \|_{H^{-1}(\Omega)}$. 

Since $\| \cdot \|_{\delta}$ is equivalent to $\|\cdot \|_{H^{-1}(\Omega)}$ and $\widehat{\mathcal{L}}_{\delta}^{-1}$ is compact on $H^{-1}(\Omega)$, we obtain that $\widehat{\mathcal{L}}_{\delta}^{-1} +B_{\varepsilon,\delta,\lambda} :(H^{-1}(\Omega), \| \cdot \|_{\delta}) \rightarrow (H^{-1}(\Omega), \| \cdot \|_{\delta})$ is compact. The self-adjointness follows from
\begin{equation*}
	\langle \widehat{\mathcal{L}}_{\delta}^{-1} u ,v \rangle_{\delta } = \langle \widehat{\mathcal{L}}_{\delta}^{-1}u , \widehat{\mathcal{L}}_{\delta}^{-1} v \rangle_{H^{-1}(\Omega),H_0^1(\Omega)} = \langle u ,\widehat{\mathcal{L}}_{\delta}^{-1} v \rangle_{\delta}
\end{equation*}
and
\begin{equation*}
	\langle \mathbbm{1}_{\Omega}\mathcal{P}_{\varepsilon}\widehat{\mathcal{L}}_{\delta}^{-1}u ,v \rangle_{\delta} = \langle  \mathcal{P}_{\varepsilon}\widehat{\mathcal{L}}_{\delta}^{-1} u , \widehat{\mathcal{L}}_{\delta}^{-1} v \rangle_{H^{-1}(\Omega),H_0^1(\Omega)} =\langle  \widehat{\mathcal{L}}_{\delta}^{-1}u , \mathcal{P}_{\varepsilon}\widehat{\mathcal{L}}_{\delta}^{-1} v \rangle_{H^{-1}(\Omega),H_0^1(\Omega)} = \langle u , \mathbbm{1}_{\Omega}\mathcal{P}_{\varepsilon}\widehat{\mathcal{L}}_{\delta}^{-1} v \rangle_{\delta}.
\end{equation*}
Therefore, $\widehat{\mathcal{L}}_{\delta}^{-1} +B_{\varepsilon,\delta,\lambda} :(H^{-1}(\Omega), \| \cdot \|_{\delta}) \rightarrow (H^{-1}(\Omega), \| \cdot \|_{\delta})$ is a compact self-adjoint operator on $(H^{-1}(\Omega), \| \cdot \|_{\delta})$. The conclusion of Theorem \ref{thm:residual spectrum} immediately follows from the standard stability theorem for compact self-adjoint operators and the following estimates:
\begin{equation*}
	\begin{aligned}
		\| B_{\varepsilon,\delta,\lambda} \,f \|_{\delta} & \leq C \| B_{\varepsilon,\delta,\lambda}\,f \|_{L^2(\Omega)} \\
		&\leq C \big( 1 + |\lambda|^{-1 }  \| \gamma_{\kappa} (\lambda)^{-1} \|  \big)\| \mathcal{P}_{\varepsilon} - \mathrm{Id}_{L^2(\widehat{\Omega}_{\varepsilon} )} \|_{H^1(\Omega ) \rightarrow L^2(\widehat{\Omega}_{\varepsilon}  )} \| \widehat{\mathcal{L}}_{\delta}^{-1} f \|_{H^1(\Omega)} \\
		& \leq C \varepsilon \big( 1 + |\lambda|^{-1 }  \| \gamma_{\kappa} (\lambda)^{-1} \|  \big) \| f \|_{\delta}.
	\end{aligned}
\end{equation*}
We are done.
\hfill $\square$

\section{The scalar case of $m=1$}\label{sec:scalar case}

We assume that $\mathcal{L}_{\omega,y}^{-1} \, \psi_i = \beta_i \psi_i$, where the associated normalized eigenfunction $\psi_i$ has nonzero mean.
Then
\begin{equation}
	(\kappa  \mathcal{L}_{\omega,y}^{-1} - \lambda   )^{-1} [I_m](y)  = \left\{
	\begin{aligned}
		& \sum_i \frac{ \int_Y \psi_i (y)\,dy}{\kappa \beta_i -\lambda}\otimes \psi_i 
		& \mathrm{in}\ \omega, \\
		& -\lambda^{-1} I_m & \mathrm{in}\ Y\setminus \omega.
	\end{aligned}\right.
\end{equation}
The $pq$-th element of $\gamma_{\kappa}(\lambda) $ is 
\begin{equation*}
	( \gamma_{\kappa}(\lambda) )_{pq}= \sum_{i \geq 1} \frac{ (\int_Y \psi_i^p (y)\,dy)(\int_Y \psi_i^q (y)\,dy) }{\lambda - \kappa \beta_i }  + \frac{1-\theta}{\lambda} \delta_{pq} .
\end{equation*}
We denote $\beta_{\kappa} (\lambda) = \gamma_{\kappa}(\lambda^{-1})$. From now, we assume that $m=1$, then
\begin{equation}
	\beta_{\kappa} (\lambda) =\lambda \sum_{i \geq 1} \frac{ (\int_Y \psi_i (y)\,dy)^2}{1 - \kappa \beta_i \lambda}  + (1-\theta) \lambda.
\end{equation} 
then $\beta_{\kappa}(\lambda)$ is increasing in each interval $(\beta_i ,\beta_{i+1})$, and $\beta_{\kappa}'(\lambda)$ has a lower bound $1- \theta$, which is easily seen by a simple computation
\begin{equation*}
	\beta_{\kappa}'(\lambda) = \sum_{k=1}^{\infty} \frac{(\int_Y \psi_i (y)\,dy)^2 }{(1 - \kappa \beta_i \lambda)^2} +1-\theta\geq 1-\theta>0.
\end{equation*}

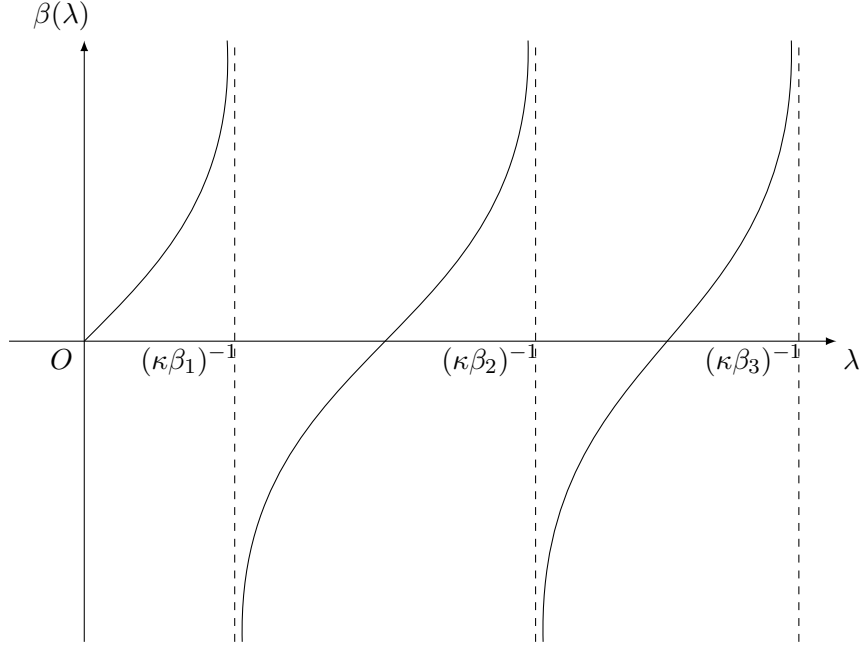
\begin{figure}[h]
	\begin{tikzpicture}
		\draw[-latex] (-1,0) -- (10,0);
		\draw[-latex] (0,-4) -- (0,4);
		\coordinate[label=$O$] (O) at (-0.3,-0.5);
		\coordinate[label=$\lambda$] (lambda) at (10.2,-0.5);
		\coordinate[label=$\beta(\lambda)$] (beta) at (-0.3,4);
		\draw[dashed] (2,-4) -- (2,4); 
		\draw[dashed] (6,-4) -- (6,4); 
		\draw[dashed] (9.5,-4) -- (9.5,4); 
		\draw (0,0) .. controls (1,1) and (2,2) .. (1.9,4); 
		\draw (2.1,-4) .. controls (2,0) and (6,0) .. (5.9,4);
		\draw (6.1,-4) .. controls (6,0) and (9.5,0) .. (9.4,4);
		\coordinate[label=$(\kappa \beta_1)^{-1}$] (omega1) at (1.4,-0.6);
		\coordinate[label=$(\kappa \beta_2)^{-1}$] (omega2) at (5.4,-0.6);
		\coordinate[label=$(\kappa \beta_3)^{-1}$] (omega3) at (8.9,-0.6);
	\end{tikzpicture}
	\caption{The graph of $\beta(\lambda)$.}
\end{figure}

\begin{theorem}\label{thm:scalar}
	For any $i\geq 0$ and $j \geq 1$, let $\lambda_{i,j}$ be the unique solution of $\beta_{\kappa}(\lambda) = \theta_j$ in the interval $\big( (\kappa\beta_i)^{-1} , (\kappa\beta_{i+1})^{-1} \big)$, where $\theta_j$ is the $j$-th eigenvalue of $
	\widehat{\mathcal{L}}_{\delta}$ in the increasing order. Let $\lambda_{i,j}^{\varepsilon}$ be the unique solution of $\beta_{\kappa}(\lambda) =  \theta_j^{\varepsilon}$ in the interval $\big( (\kappa\beta_i)^{-1} , (\kappa\beta_{i+1})^{-1} \big)$, where $\theta_j^{\varepsilon}$ is the $j$-th eigenvalue of $(\widehat{\mathcal{L}}_{\delta}^{-1} +B_{\varepsilon,\delta,\lambda})^{-1} $.
	There exists $C>0$, depends only in $\Omega$, such that if 
	\begin{equation}\label{constchose}
		\varepsilon < \frac{C}{\theta_j},
	\end{equation}
	 then
	\begin{equation}
			|\lambda_{i,j}  -\lambda_{i,j}^{\varepsilon}| \leq C \varepsilon \theta_j \big(\theta_j+(\kappa\beta_{i+1})^{-1} \big) .
	\end{equation}
\end{theorem}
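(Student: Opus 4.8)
The plan is to exploit the mean value theorem for $\beta_\kappa$ together with the already-established lower bound $\beta_\kappa'(\lambda) \geq 1 - \theta$. Since $\lambda_{i,j}$ and $\lambda_{i,j}^\varepsilon$ both lie in the fixed interval $\big((\kappa\beta_i)^{-1}, (\kappa\beta_{i+1})^{-1}\big)$ and satisfy $\beta_\kappa(\lambda_{i,j}) = \theta_j$, $\beta_\kappa(\lambda_{i,j}^\varepsilon) = \theta_j^\varepsilon$, the mean value theorem gives
\begin{equation*}
	|\lambda_{i,j} - \lambda_{i,j}^\varepsilon| \leq \frac{1}{1-\theta}\, |\theta_j - \theta_j^\varepsilon|,
\end{equation*}
so the entire problem reduces to controlling $|\theta_j - \theta_j^\varepsilon|$, the gap between the $j$-th eigenvalue of $\widehat{\mathcal{L}}_\delta$ and the $j$-th eigenvalue of $(\widehat{\mathcal{L}}_\delta^{-1} + B_{\varepsilon,\delta,\lambda})^{-1}$.

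First I would pass to the inverses. Working with the compact self-adjoint operators $\widehat{\mathcal{L}}_\delta^{-1}$ and $\widehat{\mathcal{L}}_\delta^{-1} + B_{\varepsilon,\delta,\lambda}$ on $(H^{-1}(\Omega), \|\cdot\|_\delta)$, the standard min-max stability theorem for compact self-adjoint operators yields that the $j$-th eigenvalue of $\widehat{\mathcal{L}}_\delta^{-1} + B_{\varepsilon,\delta,\lambda}$ differs from $\theta_j^{-1}$ by at most $\|B_{\varepsilon,\delta,\lambda}\|_\delta$. From the estimate established at the end of the proof of Theorem~\ref{thm:residual spectrum}, namely $\|B_{\varepsilon,\delta,\lambda}\|_\delta \leq C\varepsilon(1 + |\lambda|^{-1}\|\gamma_\kappa(\lambda)^{-1}\|)$, and since in the scalar case the relevant $\lambda$ is pinned near $\lambda_{i,j}^\varepsilon$ with $\gamma_\kappa$ controlled on the interval, one gets $\|B_{\varepsilon,\delta,\lambda}\|_\delta \leq C\varepsilon$. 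Hence $|\theta_j^{-1} - (\theta_j^\varepsilon)^{-1}| \leq C\varepsilon$. Converting back, $|\theta_j - \theta_j^\varepsilon| = \theta_j \theta_j^\varepsilon |\theta_j^{-1} - (\theta_j^\varepsilon)^{-1}| \leq C\varepsilon\, \theta_j\theta_j^\varepsilon$, and under the smallness condition \eqref{constchose} we have $\theta_j^\varepsilon \leq 2\theta_j$ (the perturbation cannot move the eigenvalue by more than a factor $2$ once $\varepsilon < C/\theta_j$), so $|\theta_j - \theta_j^\varepsilon| \leq C\varepsilon\theta_j^2$.

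Finally I would track the dependence on the interval more carefully to recover the stated $\big(\theta_j + (\kappa\beta_{i+1})^{-1}\big)$ factor rather than just $\theta_j^2$. The point is that the mean value theorem bound should be applied with the sharper local information: on the interval $\big((\kappa\beta_i)^{-1}, (\kappa\beta_{i+1})^{-1}\big)$ the derivative $\beta_\kappa'$ is bounded below by $1-\theta$ but the displacement of $\lambda_{i,j}^\varepsilon$ from $\lambda_{i,j}$ is also constrained by the length of the interval and by how the $B_{\varepsilon,\delta,\lambda}$ estimate degrades as $\lambda$ approaches the interval endpoint $(\kappa\beta_{i+1})^{-1}$. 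Combining $|\lambda_{i,j} - \lambda_{i,j}^\varepsilon| \leq C|\theta_j - \theta_j^\varepsilon| \leq C\varepsilon\theta_j(\theta_j + \theta_j^\varepsilon)$ with the observation that the $\gamma_\kappa(\lambda)^{-1}$ term in the $B$-estimate contributes a factor comparable to the distance from $\lambda$ to the pole at $(\kappa\beta_{i+1})^{-1}$—hence at worst $(\kappa\beta_{i+1})^{-1}$—gives the claimed bound $|\lambda_{i,j} - \lambda_{i,j}^\varepsilon| \leq C\varepsilon\theta_j\big(\theta_j + (\kappa\beta_{i+1})^{-1}\big)$.

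The main obstacle I expect is this last bookkeeping step: making precise how $\|B_{\varepsilon,\delta,\lambda}\|_\delta$, and thus the eigenvalue perturbation, depends on the location of $\lambda$ within the interval through the blow-up of $\gamma_\kappa(\lambda)^{-1}$ near the endpoints $(\kappa\beta_i)^{-1}$, and confirming that the smallness hypothesis \eqref{constchose} is exactly what is needed to keep $\lambda_{i,j}^\varepsilon$ safely inside the interval so that all these local estimates remain valid.
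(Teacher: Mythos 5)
Your opening reduction is exactly the paper's: the mean value theorem with the lower bound $\beta_{\kappa}'\geq 1-\theta$ reduces everything to controlling the gap between $\theta_j=\beta_{\kappa}(\lambda_{i,j})$ and $\theta_j^{\varepsilon}=\beta_{\kappa}(\lambda_{i,j}^{\varepsilon})$. But the core quantitative step is missing. The stability estimate you actually have at your disposal (Theorem \ref{thm:residual spectrum} specialized to $m=1$, i.e.\ \eqref{eigen_esti_5}) is a \emph{relative} bound
\begin{equation*}
	\big| \beta_{\kappa}(\lambda_{i,j})^{-1} - \beta_{\kappa}(\lambda^{\varepsilon}_{i,j})^{-1}  \big| \leq M \varepsilon \,\big| 1-\lambda_{i,j}^{\varepsilon}\, \beta_{\kappa}(\lambda^{\varepsilon}_{i,j})^{-1}   \big|
		= M\varepsilon\,\frac{|\beta_{\kappa}(\lambda_{i,j}^{\varepsilon})-\lambda_{i,j}^{\varepsilon}|}{|\beta_{\kappa}(\lambda_{i,j}^{\varepsilon})|},
\end{equation*}
and the factor on the right is \emph{not} bounded by a constant over the interval $\big((\kappa\beta_i)^{-1},(\kappa\beta_{i+1})^{-1}\big)$: it blows up where $\beta_{\kappa}$ vanishes, and it grows with the endpoint $(\kappa\beta_{i+1})^{-1}$. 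Your assertion that $\|B_{\varepsilon,\delta,\lambda}\|_{\delta}\leq C\varepsilon$ with a uniform constant (``$\gamma_{\kappa}$ controlled on the interval'') is therefore unjustified, and the clean chain $|\theta_j^{-1}-(\theta_j^{\varepsilon})^{-1}|\leq C\varepsilon$, hence $|\theta_j-\theta_j^{\varepsilon}|\leq C\varepsilon\theta_j^2$, does not follow. Likewise, the claim ``$\theta_j^{\varepsilon}\leq 2\theta_j$ once $\varepsilon<C/\theta_j$'' is precisely what has to be \emph{proved}, not assumed: since the estimate for $\theta_j^{\varepsilon}$ involves $\theta_j^{\varepsilon}$ itself, one must close a self-referential inequality. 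You also never consider the possibility that $\theta_j^{\varepsilon}=\beta_{\kappa}(\lambda_{i,j}^{\varepsilon})$ is negative or close to zero, which the theorem's formulation permits since $\beta_{\kappa}$ sweeps all of $\mathbb{R}$ on each interval.

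The paper resolves exactly this by a three-case bootstrap on the size of $\beta_{\kappa}(\lambda_{i,j}^{\varepsilon})$ relative to $\lambda_{i,j}^{\varepsilon}$: when $|\beta_{\kappa}(\lambda_{i,j}^{\varepsilon})|\leq 2M\lambda_{i,j}^{\varepsilon}$ one bounds $|\beta_{\kappa}(\lambda_{i,j}^{\varepsilon})-\lambda_{i,j}^{\varepsilon}|\leq C\lambda_{i,j}^{\varepsilon}\leq C(\kappa\beta_{i+1})^{-1}$ (this, not any ``distance to the pole'', is where the $(\kappa\beta_{i+1})^{-1}$ term comes from); when $\beta_{\kappa}(\lambda_{i,j}^{\varepsilon})>2M\lambda_{i,j}^{\varepsilon}$ or $<-2M\lambda_{i,j}^{\varepsilon}$, one rearranges \eqref{eigen_esti_5} into an inequality of the form $\big(\beta_{\kappa}(\lambda_{i,j}^{\varepsilon})-\lambda_{i,j}^{\varepsilon}\big)\big(1-M\varepsilon\,\beta_{\kappa}(\lambda_{i,j})\big)\leq \beta_{\kappa}(\lambda_{i,j})-\lambda_{i,j}^{\varepsilon}$ and uses the smallness hypothesis \eqref{constchose} (with $C=(2M)^{-1}$) to absorb the prefactor and conclude $|\beta_{\kappa}(\lambda_{i,j}^{\varepsilon})-\lambda_{i,j}^{\varepsilon}|\leq 2|\beta_{\kappa}(\lambda_{i,j})-\lambda_{i,j}^{\varepsilon}|$, yielding the $\theta_j^2$ contribution. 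Your final paragraph gestures at this bookkeeping but, as you yourself acknowledge, does not carry it out; since that is the entire content of the theorem beyond the first displayed inequality, the proof is incomplete as written.
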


\begin{proof}
	By Theorem \ref{thm:residual spectrum} for $m=1$, there exists $M>1$, depends only on $\Omega$, such that
	\begin{equation}\label{eigen_esti_5}
		| \beta_{\kappa}(\lambda_{i,j})^{-1} - \beta_{\kappa}(\lambda^{\varepsilon}_{i,j})^{-1}  | \leq M \varepsilon \big| 1-\lambda_{i,j}^{\varepsilon}\, \beta_{\kappa}(\lambda^{\varepsilon}_{i,j})^{-1}   \big|,
	\end{equation} 
	then since $\beta_{\kappa}$ is smooth in each interval $\big((\kappa \beta_i)^{-1},(\kappa \beta_{i+1})^{-1} \big)$, we have
	\begin{equation}\label{eigen_estimate}
		\begin{aligned}
			|\lambda_{i,j}  - \lambda_{i,j}^{\varepsilon}|& \leq \frac{1}{ \inf_{ \lambda} \beta_{\kappa}'(\lambda)} |\beta_{\kappa}(\lambda_{i,j})  - \beta_{\kappa}(\lambda_{i,j}^{\varepsilon} )| \\
			& \leq \frac{|\beta(\lambda_{i,j})  \beta(\lambda^{\varepsilon}_{i,j})  |}{1- \theta } 	\left|  \frac{1}{\beta(\lambda_{i,j})} - \frac{1}{\beta(\lambda^{\varepsilon}_{i,j}) } \right| \\
			&\leq \frac{M \varepsilon}{1-\theta} \beta(\lambda_{i,j})  |\beta(\lambda_{i,j}^{\varepsilon}) - \lambda_{i,j}^{\varepsilon}|.
		\end{aligned}
	\end{equation}
	Then we have three cases: 
	\begin{itemize}
		\item[(a)] If $|\beta(\lambda_{i,j}^{\varepsilon})| \leq 2M\lambda_{i,j}^{\varepsilon}$, then \eqref{eigen_estimate} gives that
		\begin{equation}\label{last0}
			|\lambda_{i,j}  - \lambda_{i,j}^{\varepsilon}| \leq C\varepsilon \beta(\lambda_{i,j}) \lambda_{i,j}^{\varepsilon} \leq C\varepsilon\theta_j (\kappa\beta_{i+1})^{-1} .
		\end{equation}
		\item[(b)] If $\beta(\lambda_{i,j}^{\varepsilon}) > 2 M\lambda_{i,j}^{\varepsilon}$, note that $M>1$, then \eqref{eigen_esti_5} implies that 
		\begin{equation*}
			\frac{1}{\beta(\lambda_{i,j})} - \frac{1}{\beta(\lambda_{i,j}^{\varepsilon})} \leq M \varepsilon^{\frac{1}{2}} \Big(  \frac{\beta(\lambda_{i,j}^{\varepsilon}) - \lambda_{i,j}^{\varepsilon}}{\beta(\lambda_{i,j}^{\varepsilon})}   \Big),
		\end{equation*} 
		which yields that
		\begin{equation}\label{eiti}
			\big(\beta(\lambda_{i,j}^{\varepsilon} )   -   \lambda_{i,j}^{\varepsilon} \big)  \big(1-M\varepsilon^{\frac{1}{2}} \beta(\lambda_{i,j})   \big) \leq \beta(\lambda_{i,j})    - \lambda_{i,j}^{\varepsilon} ,
		\end{equation}
		we choose $C$ in \eqref{constchose} by letting 
		\begin{equation*}
			C=\frac{1}{2M},
		\end{equation*}
		that $ 2M\varepsilon^{\frac{1}{2}} \beta(\lambda_{i,j})  <1$, then \eqref{eiti} implies that
		\begin{equation*}
			\beta(\lambda_{i,j}^{\varepsilon}) -\lambda_{i,j}^{\varepsilon} \leq 2 \big(\beta(\lambda_{i,j}) - \lambda_{i,j}^{\varepsilon} \big),
		\end{equation*}
		and \eqref{eigen_estimate} gives that
		\begin{equation}\label{last1}
			|\lambda_{i,j}  - \lambda_{i,j}^{\varepsilon}| \leq C \varepsilon^{\frac{1}{2}} \beta(\lambda_{i,j}) \big(\beta(\lambda_{i,j}) - \lambda_{i,j}^{\varepsilon} \big)  \leq C \varepsilon \beta^2(\lambda_{i,j})= C\varepsilon\theta^2_j   .
		\end{equation}
		\item[(c)] If $\beta(\lambda_{i,j}^{\varepsilon}) <-2 M\lambda_{i,j}^{\varepsilon}$, then \eqref{eigen_esti_5} implies that 
		\begin{equation*}
			\frac{1}{\beta(\lambda_{i,j})} + \frac{1}{|\beta(\lambda_{i,j}^{\varepsilon})|} \leq M \varepsilon \Big(  \frac{|\beta(\lambda_{i,j}^{\varepsilon})| + \lambda_{i,j}^{\varepsilon}}{|\beta(\lambda_{i,j}^{\varepsilon})|}   \Big),
		\end{equation*} 
		which yields that
		\begin{equation}
			\big( |\beta(\lambda_{i,j}^{\varepsilon} )  | + \lambda_{i,j}^{\varepsilon} \big)  \big(1-M\varepsilon^{\frac{1}{2}} \beta(\lambda_{i,j})   \big) \leq    \lambda_{i,j}^{\varepsilon} - \beta(\lambda_{i,j})   ,
		\end{equation}
		we still choose $C$ in \eqref{constchose} by letting 
		\begin{equation*}
			C=\frac{1}{2M},
		\end{equation*}
		then
		\begin{equation*}
			|\beta(\lambda_{i,j}^{\varepsilon}) |+ \lambda_{i,j}^{\varepsilon} \leq 2 \big(\lambda_{i,j}^{\varepsilon} - \beta(\lambda_{i,j}) \big),
		\end{equation*}
		and \eqref{eigen_estimate} gives that
		\begin{equation}
			|\lambda_{i,j}  - \lambda_{i,j}^{\varepsilon}| \leq C \varepsilon^{\frac{1}{2}} \beta(\lambda_{i,j})\big(\lambda_{i,j}^{\varepsilon} - \beta(\lambda_{i,j}) \big)  \leq C\varepsilon^{\frac{1}{2}} \theta_j (\kappa\beta_{i+1})^{-1} .
		\end{equation}
	\end{itemize}
	In summary, if the constant $C= (2M)^{-1}$ in \eqref{constchose}, we have
	\begin{equation*}
		|\lambda_{i,j}  -\lambda_{i,j}^{\varepsilon}| \leq C \varepsilon \theta_j (\theta_j+(\kappa\beta_{i+1})^{-1} ),
	\end{equation*}
	the proof is complete.
\end{proof}

\bibliographystyle{siam}
\bibliography{mybib}

\end{document}